\newcommand{\norm}[2]{\left\| #1 \right\|_{#2}}
\newcommand{\abs}[1]{\left| #1 \right|}
\DeclareMathOperator*{\argmin}{arg\,min}
\newcommand\cR{\mathcal R}
\newcommand\bc{\boldsymbol c}
\newcommand\ba{\boldsymbol a}
\newcommand\bzero{\boldsymbol 0}
\newcommand\balpha{\boldsymbol \alpha}
\newcommand\bBeta{\boldsymbol \beta}
\newcommand\bj{\boldsymbol j}
\newcommand\bk{\boldsymbol k}
\newcommand\bv{\boldsymbol v}
\newcommand\bu{\boldsymbol u}
\newcommand\bn{\boldsymbol n}
\newcommand\brho{\boldsymbol \rho}
\newcommand\bx{\boldsymbol x}
\newcommand\bw{\boldsymbol w}
\newcommand\by{\boldsymbol y}
\newcommand\cF{\mathcal F}
\newcommand\bz{\boldsymbol{z}}
\newcommand\Bz{\boldsymbol{z}}
\newcommand\ta{\widetilde{a}}
\newcommand\hf{\widehat{f}}
\newcommand\hg{\widehat{g}}
\newcommand\hrho{\widehat{\rho}}
\newcommand\TA{\mathbb A}
\newcommand\cA{\mathcal{A}}
\newcommand\cS{\mathcal{S}}
\newcommand\halpha{\hat \alpha }
\newcommand\bomega{\boldsymbol{\omega}}
\renewcommand\Re{\operatorname{Re}}
\renewcommand\Im{\operatorname{Im}}
\newcommand\bbE{\mathbb E}
\newcommand\bbC{\mathbb C}
\newcommand\bbN{\mathbb N}
\newcommand\bbR{\mathbb R}
\newcommand\bbZ{\mathbb Z}
\newcommand\supp{\operatorname{supp}}
\newtheorem{theorem}{Theorem}
\theoremstyle{definition}
\newtheorem{definition}{Definition}
\newtheorem{example}{Example}
\theoremstyle{remark}
\begin{document}
\title{Recovering missing data in coherent diffraction imaging}
\author[1]{D.~A. Barmherzig}
\author[1]{A.H.~Barnett}
\author[1,2]{C.L.~Epstein}
\author[1,3]{L.F.~Greengard}
\author[1]{ J.F.~Magland}
\author[1]{M.~Rachh}
\affil[1]{Center for Computational Mathematics, Flatiron Institute,
  New York, NY}
\affil[2]{Department of Mathematics,  University of Pennsylvania}
\affil[3]{Courant Institute, New York University}
\maketitle

\begin{abstract}
  In coherent diffraction imaging (CDI) experiments, the intensity of
  the scattered wave impinging on an object is measured on an array of
  detectors. This signal can be interpreted as the square of the
  modulus of the Fourier transform  of the unknown
  scattering density.  A beamstop obstructs
  the forward scattered wave and, hence, the modulus Fourier data from
  a neighborhood of $\bk=\bzero$ cannot be measured. In this note, we
  describe a \emph{linear} method for recovering this unmeasured modulus
  Fourier data from the measured values and an estimate of the support
  of the image's autocorrelation function {\em without} consideration
  of phase retrieval. We analyze the effects of noise, and the conditioning of this problem,
  which grows exponentially with the modulus of the maximum spatial
  frequency not measured.\newline
  {\bf Keywords:} Coherent diffraction imaging, hole in $\bk$-space,
    autocorrelation image, recovered magnitude data, noise.
\end{abstract}

\section{Introduction}
In coherent diffraction imaging (CDI) experiments, the intensity of the
scattered wave impinging on an object is measured on an array of detectors. This
signal can be interpreted as the square of the modulus of the Fourier transform
$|\hrho(\bk)|^2$ of the unknown scattering density, denoted by $\rho(\bx)$ \cite{Miao1999, Chapman2006}.  The
spatial frequency, $\bk,$ is related to the scattering direction through the Ewald
sphere construction \cite{chapmanetal}. We assume here that the x-ray wavelength
is sufficiently small that the curvature of the Ewald sphere can be neglected
and that we are sampling $|\hrho(\bk)|^2$ on a uniform grid.
The {\em phase retrieval problem} is to recover the complex
values $\hrho(\bk)$, and hence the desired unknown, from the measured intensity
data, supplemented by auxiliary information, which is typically the approximate
support of $\rho(\bx)$ \cite{GeomPR18,EldarReview15,Barmherzig2019}. Unfortunately, a {\em beamstop} obstructs the forward
scattered wave and, hence, the modulus Fourier data from a neighborhood of
$\bk=\bzero$ cannot be measured. Standard iterative approaches to 
recovering the unmeasured samples, such as HIO or difference maps 
\cite{ERTPNAS2007,Fienup1982},
use the auxiliary
information to fill in the unmeasured Fourier coefficients at the same
time as the image itself is reconstructed.  
In this note, we describe a \emph{linear}
method for recovering this unmeasured modulus Fourier data from the measured
values and an estimate of the support of the image's autocorrelation function,
{\em without} consideration of phase retrieval.

To set various parameters and length scales, we assume 
that $\rho$ is supported in a
compact subset of ${\cS} = (-\frac 12,\frac 12)^d,$ and that 
its autocorrelation image
\[
(\rho\star\rho)(\by) \equiv \int_{\cal S} \rho(\bx) \rho(\by+\bx) \, d\bx
\]
is supported in 
$(-\frac \beta 2,\frac \beta 2)^d.$ We define the {\em field of view}
(FOV)
as the box
$(-\frac m 2,\frac m 2)^d$ (Fig. \ref{figsetup}).

The Fourier transform, $\hrho,$ is defined by
\begin{equation}
  \hrho(\bk)\equiv \int_{\bbR^d}\rho(\bx)e^{-2\pi i\bx\cdot\bk}d\bx,
\end{equation}
so that 
\begin{equation}
  \rho(\bx)\equiv \int_{\bbR^d}\hrho(\bk)e^{2\pi i\bx\cdot\bk}d\bk.
\label{invtrans}
\end{equation}
Finally, we assume that $|\hrho(\bk)|^2$ is given in the box ${\cal D}$ 
of side length $2 K_{max}$, from which a window 
${\cal W} = [-k_0,k_0]^d$ is deleted,
corresponding to the beamstop (Fig. \ref{figsetup}).

\begin{figure}[H]
  \centering
       \includegraphics[width=12cm]{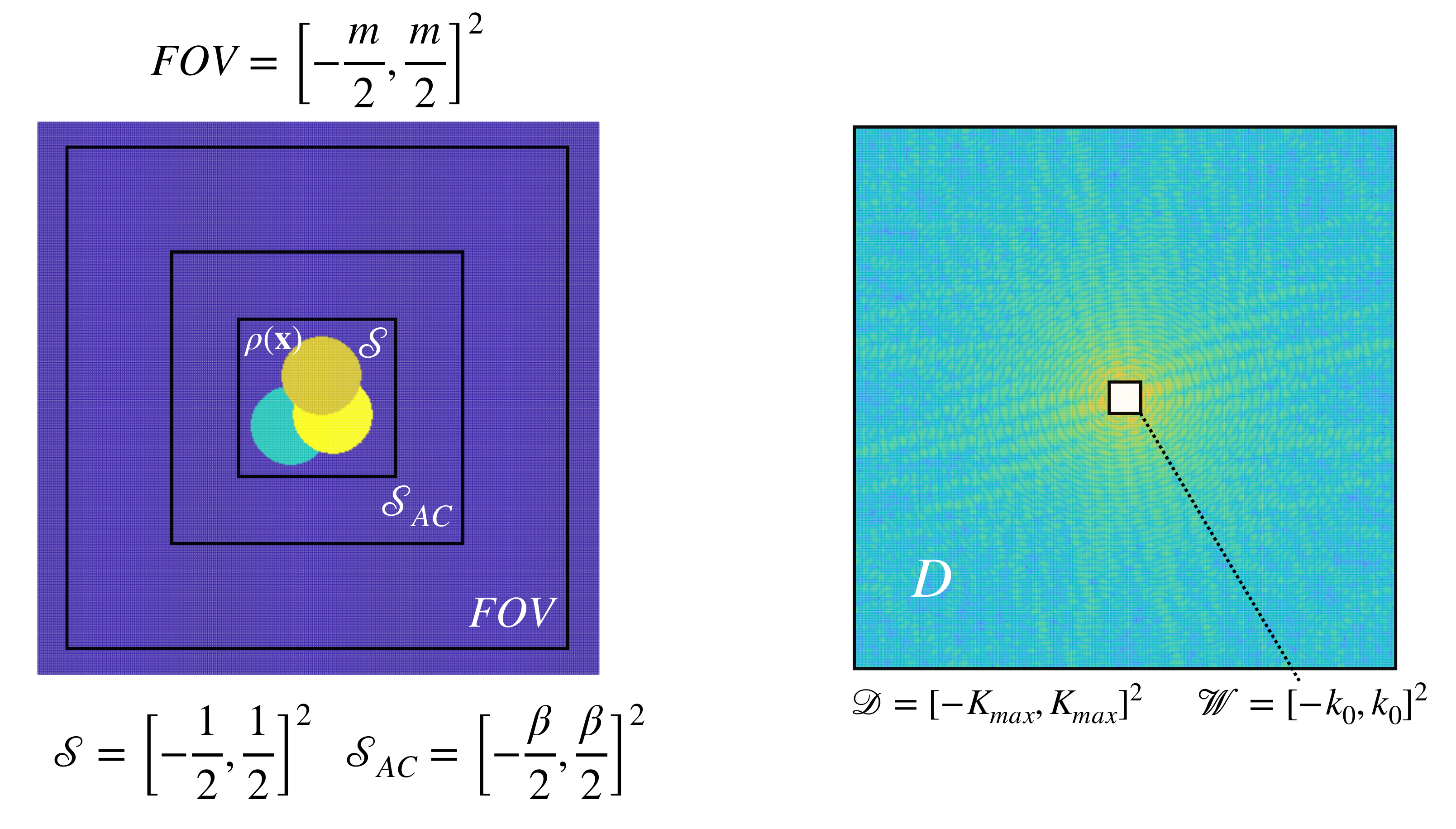}
        \caption{A two-dimensional 
object of interest $\rho(\bx)$ is supported in the
bounded region ${\cal S} = [-\frac 12,\frac 12]^2$  and its autocorrelation is
supported in ${\cal S}_{AC} = [-\frac \beta 2,\frac \beta 2]^2 $. 
The field of view (FOV) is the maximum region in physical space where 
we expect the Fourier
transform \eqref{invtrans} to be valid for a given sampling of 
$\hrho(\bk)$. We denote by $R$ the subset of the field of view outside
${\cal S}_{AC}$. In the transform domain ($k$-space), 
the modulus of $\hrho(\bk)$ is measured on a box ${\cal D}$, with the region
${\cal W}$ obscured by the beamstop.
}
\label{figsetup}
\end{figure}

For the sake of simplicity, we work in the discrete setting, with 
\begin{equation}\label{eqn1}
  \rho_{\bj}=\rho\left(\frac{\bj}{2N}\right),  \text{ for }\quad\bj\in
[1-N:N]^d.
\end{equation}
From the Nyquist sampling theorem, the grid spacing $\frac{1}{2N}$ in
physical space corresponds to a maximum frequency in the transform
domain of $K_{max} = N$.  Following Fig. \ref{figsetup}, we define
$J=[1-mN:mN]^d,$ in order to cover the field of view.  The vector
$\brho\in \bbR^J,$ is of length $(2mN)^d,$ and has entries defined by
\eqref{eqn1} for $\bj\in [1-N:N]^d$ and zero otherwise, corresponding
to the fact that $\rho(\bx)$ is supported in $\cS.$ In taking the
discrete Fourier transform of $\brho$ with a fixed $N$, increasing
values of $m$ lead to a finer sampling of $\hrho$ in the transform
domain, without changing the maximum frequency $K_{max} = N$.  As a
result, we sometimes refer to $m>1$ as the ``oversampling'' factor.
In the CDI experiment, oversampling corresponds to using an array of
sensors that measure $|\hrho(\bk)|$ on a grid with spacing $\Delta k =
1/m$.  This is the Nyquist sampling rate in the inverse direction,
sufficient to recover a physical object {\em within the field of
  view}.  It is the combination of oversampling with prior information
about the support of $\rho(\bx)$ that makes the phase retrieval
problem solvable, for a dense open set of data, in dimensions $d>1$
\cite{BruckSodin1979,Hayes1982}.

In the remainder of this paper, we let
\begin{equation}\label{eqn2.0}
  \hrho_{\bk}\equiv \sum_{\bj \in J}\rho_{\bj}e^{-\frac{2\pi i \bj\cdot\bk}{2mN}}\approx
   (2N)^d \hrho\left(\frac{\bk}{m}\right)
\end{equation}
denote the discrete Fourier transform (DFT) of the data extended
by zero to the entire field of view. Thus, in our model
problem, the measured intensity is proportional to
$(|\hrho_{\bk}|^2)$. 
We also recall the well-known fact that
$(|\hrho_{\bk}|^2)$ is the DFT of the discrete
(periodic) autocorrelation image:
\[
(\brho\star\brho)_{\bk}\equiv \sum_{\bj\in J}\rho_{\bj}\rho_{\bj+\bk}.
\]

Let $W\subset J$ denote the set of lattice points obstructed by the
beamstop ${\cal W}$.  Substantial effort has been devoted to the
development of methods for approximating the Fourier coefficients at
these frequencies.  Typically, this involves an iterative method
designed to solve the phase retrieval problem and missing data problem
simultaneously (see, for example, \cite{NishinoMiaoIshikawa2003, Cossairt2015}.)  In
Section~\ref{s.rec_alg} we describe a linear algorithm for recovering
the unmeasured values $\{|\hrho_{\bk}|^2:\:\bk\in W\}.$ It amounts to
solving a least squares problems for the unmeasured coefficients,
using knowledge about the support of the autocorrelation of $\brho$ as
a constraint. In Section~\ref{sec3.01} we analyze the conditioning of
this problem, by relating it to classical results for prolate
spheroidal functions, see~\cite{Slepian1978}.  We show that if $|W|$
is not too large, then, with sufficiently fine sampling in the Fourier
domain, e.g. $m\geq 3,$ the unmeasured magnitude data,
$\{|\hrho_{\bk}|^2\text{ for }\bk\in W\},$ can be stably determined by
solving the least squares problem (Fig. \ref{fig:beamstop}).

While ${\cal W}$ (and ${\cal S}_{AC}$) 
can, in principle take any shape, 
we assume for simplicity that it is square so that the lattice points
lying within ${\cal W}$ are of the form
$W=[1-w:w-1]^d$, where $w = \lfloor{ 1+  m k_0} \rfloor$.
With the discretized autocorrelation image 
$\brho\star\brho$ supported in $[-\beta N:\beta N]^d,$ 
connections with prolate spheroidal functions show that
asymptotically, as $m, N,$ and $\beta k_{0}$ grow large, the
conditioning of the linear method grows like
\begin{equation}\label{eqn4.003}
 \kappa(\beta,k_{0})\sim \frac{e^{\pi \beta k_{0}}}{\sqrt{4\pi
     d}[\beta k_{0}]^{\frac 14}}.
\end{equation}
This estimate is proven in Appendix~\ref{A1}.
Empirically this asymptotic result is already accurate for
$N\geq 128, m\geq 3.$ The main lessons of~\eqref{eqn4.003}, and the
examples in Section~\ref{exmpl.sec}, are
\begin{enumerate}
\item The conditioning of the hole filling problem
problem is largely determined by the \emph{physical} space-bandwidth product
$\beta k_{0}.$
\item Little can be gained in this context by taking either $m\geq 4,$ or $N$ very large
  (neither parameter appears in the formula).
  \item As the dimension, $d,$  increases the conditioning can be expected
    to slightly improve; in particular, the exponent
    in~\eqref{eqn4.003} does \emph{not} depend
    on $d.$
\end{enumerate}
 
\begin{figure}[H]
  \centering
       \includegraphics[width=12cm]{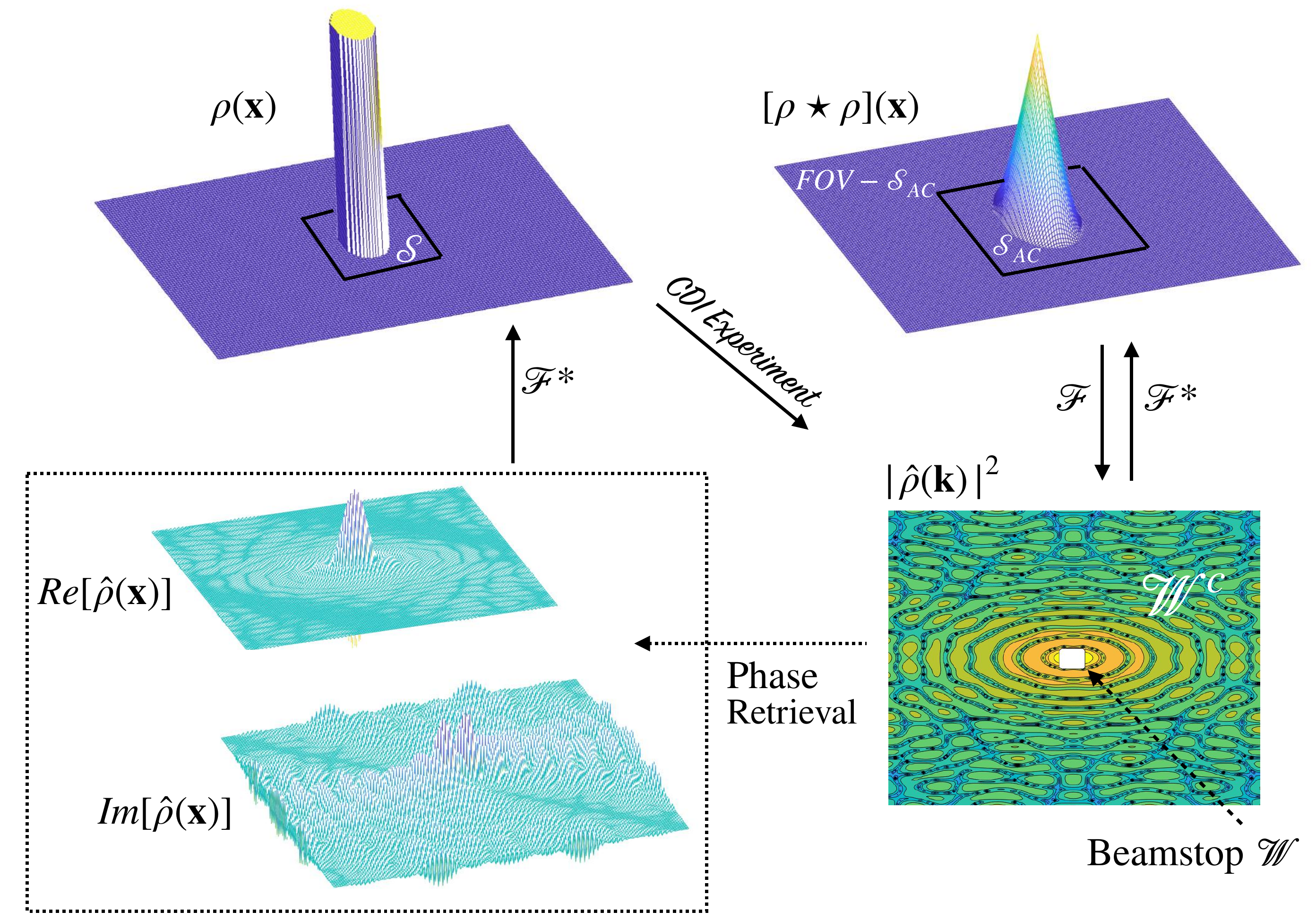}
        \caption{An unknown object $\rho(\bx)$ is supported in a bounded region 
          ${\cal S}$ (upper left) and its autocorrelation is supported in 
        ${\cal S}_{AC}$ (upper right).
         In standard methods for phase retrieval
         from CDI experiments, 
         the missing data within the beamstop ${\cal W}$
         is inferred as part of an overall iterative scheme. 
         Here, we solve for the missing data itself, without consideration
         of phase, by using an estimate for the support of the 
         autocorrelation image
         and solving a linear least squares problem.}\label{fig:beamstop}
\end{figure}

The effects of noise are analyzed in Sections~\ref{noise.sec}--\ref{sec6}, where it is shown
that, if $\beta k_{0}$ is not too large, then, with sufficient SNR, this
scheme can be robust even in the presence of noise.  At lower SNR, we
show that improved images may result if some of the reconstructed
modulus data is used, and some of the coefficients are found
implicitly in the phase retrieval step.

\section{The Recovery Algorithm}\label{s.rec_alg}
 In our model, the measured data, denoted by $\ba^2,$ consists of
\begin{equation}\label{eqn4.01}
  a^2_{\bj}=\begin{cases} |\hrho_{\bj}|^2&\text{ for }\bj\in  W^c = J\setminus
  W\\
  0&\text{ for }\bj\in   W.
  \end{cases}
\end{equation}
In the image domain,
let $S \subset [1-N:N]^d \subset J$ be the lattice points
within our estimate for the support of $\brho$, then
$$ S_{AC} = S\ominus S=\{\bj-\bk:\: \bj,\bk\in S\},$$
is an estimate for the support of the autocorrelation image
${\cal S}_{AC},$ and set $R= J\setminus S_{AC}$.  If $|R|>|W|$, then,
in principle, the unmeasured magnitude data can be determined.  For
this problem to be reasonably well conditioned the ratio $|W|/m^d$
must be sufficiently small, and $|R|>> |W|.$ Empirically, a little
more oversampling ($m=3$) than is required for the phase retrieval
problem to be solvable ($m=2$) produces markedly better results.
Having more samples also leads to better noise reduction when
recovering the unmeasured samples.  On other hand, greater
oversampling may require a smaller pixel size on the detector, or a
more distant detector, either of which would tend to increase the
noise content of individual measurements, so clearly there are
trade-offs to be considered. Our asymptotic analysis, and numerical
examples indicate that there is little improvement beyond $m=4.$

Let $\cF$ denote the $d$-dimensional DFT matrix, normalized to be a
unitary operator, and let $\cF^*$ be its adjoint.  To keep the
notation simpler, we omit the spatial dimension $d$ when the context is
clear. We interpret
$\cF$ as a map from data on the $|J|$-point grid in the physical
domain to a $|J|$-point grid in the frequency domain, both contained
in $\bbZ^d.$ The frequency domain grid is normalized to be centered on
$\bk=\bzero.$ In the remainder of the paper we let
$\hrho_{\bk}=[\cF(\brho)]_{\bk},$ which differs, by the constant factor,
$[2mN]^{-\frac{d}{2}},$ from the normalization in~\eqref{eqn2.0}.

\begin{definition}
We denote by $\cF_{W,R}$ the submatrix of 
of $\cF$ that maps data from grid points in $R$ to Fourier
transform points in $W$. $\cF_{W,S_{AC}}$ is the submatrix that maps
data from grid points in $S_{AC}$ to Fourier
transform points in $W$. $\cF_{W^c,R}$ and $\cF_{W^c,S_{AC}}$ are defined
in the same manner, as are the submatrices of the adjoint:
$\cF^*_{R,W}$, $\cF^*_{S_{AC},W}$, $\cF^*_{R,W^c}$, $\cF^*_{S_{AC},W^c}$.
\end{definition}

\noindent
Note that taking the adjoint interchanges the roles of the two
subsets, e.g., $[\cF_{W,R}]^*=\cF^*_{R,W}.$


As noted above, the DFT coefficients of the autocorrelation
image, $\brho\star\brho,$ are $\{|\hat{\rho}_{\bj}|^2:\:\bj\in J\}$ (the
Wiener-Khinchin theorem).  Let us now write the inverse DFT in block
form:
\begin{equation}
\left(
\begin{array}{cc}
\cF^*_{R,W} & \cF^*_{R,W^c} \\
\cF^*_{S_{AC},W} & \cF^*_{S_{AC},W^c}
\end{array}
\right)
\left(
\begin{array}{c}
\balpha_W \\
\ba^2_{W^c}
\end{array}
\right)
=
\left(
\begin{array}{cc}
0 \\
\brho \star\brho 
\end{array}
\right),
\label{idftsys}
\end{equation}
where $\ba^2_{W^c} = \ba^2$ restricted to $W^c$, is the measured data and
$\balpha_W$ denotes the (unmeasured) coefficients $\alpha_{\bj}$ of $\ba^2$ for $\bj$ restricted to $W$.
Clearly, letting $\balpha_W = (|\hat{\brho}_j|^2)_{\bj\in W}$  yields
a consistent solution of
\eqref{idftsys}, since this is simply a restatement of the Wiener-Khinchin theorem. 
If we restrict our attention to the first row, we have the $|R| \times |W|$ linear 
system:
\begin{equation}
\cF^*_{R,W} \balpha_W = 
- \cF^*_{R,W^c} \ba^2_{W^c}.
\label{lsqprob}
\end{equation}
This is shown schematically in Figure~\ref{fig:beamstop}.

For small sets $W$ and large sets $R$ the system of equations
$\cF^*_{R,W}\balpha_W=0$ has only the trivial solution
$\balpha_W=\bzero.$ Assuming that the data $\ba^2_{W^c}$ is exact,
then the highly overdetermined system in \eqref{lsqprob} has the exact
solution, $\balpha_W = (|\hat{\brho}_j|^2)_{\bj\in W},$ which is
unique. For generic right hand sides, the equation $\cF^*_{R,W} \bx= -
\cF^*_{R,W^c} \by,$ does not have an exact solution, and in the
remainder of the paper we take $\bx$ to be the solution to the least
squares problem:
\begin{equation}
  \bx_0=\argmin_{\bx}\|\cF^*_{R,W} \bx + \cF^*_{R,W^c} \by\|_2,
\end{equation}
which is also unique, as $\cF_{W,R}\cF^*_{R,W}$ is invertible.
More precisely, we have

\begin{theorem}\label{thm0}
Suppose that $J=[1-M:M]^d$. If $W \subset [p:p+u]\times [1-M:M]^{d-1}$, and
$R\supset[q:q+v]\times [1-M:M]^{d-1}$, with  $v>u$,
then $\cF^*_{R,W}\bx=0$ has only the trivial solution.
\end{theorem}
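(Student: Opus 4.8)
The plan is to reduce the $d$-dimensional statement to a one-dimensional fact about polynomials by taking a partial inverse DFT in the last $d-1$ coordinates, and then to invoke that a polynomial of degree at most $u$ with more than $u$ distinct roots vanishes identically.

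First I would unpack $\cF^*_{R,W}\bx=0$. Write each index as $\bk=(k_1,\bk')$ with $k_1\in[1-M:M]$ and $\bk'\in[1-M:M]^{d-1}$, let $\zeta$ be a primitive $2M$-th root of unity (the precise sign in the exponent of $\cF^*$ is irrelevant to what follows), and extend $\bx$ by zero from $W$ to all of $[p:p+u]\times[1-M:M]^{d-1}$. The hypothesis then reads, for every $\bj=(j_1,\bj')\in R$,
\[
\sum_{k_1=p}^{p+u}\ \sum_{\bk'\in[1-M:M]^{d-1}} x_{(k_1,\bk')}\,\zeta^{j_1 k_1}\,\zeta^{\,\bj'\cdot\bk'}=0 .
\]
For fixed $k_1$ set $y_{k_1}(\bj'):=\sum_{\bk'} x_{(k_1,\bk')}\zeta^{\,\bj'\cdot\bk'}$, the (partial, and invertible) DFT of $\bk'\mapsto x_{(k_1,\bk')}$ over the lattice $[1-M:M]^{d-1}$. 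Restricting the displayed identity to the sub-box $[q:q+v]\times[1-M:M]^{d-1}\subseteq R$ shows that, for each $\bj'\in[1-M:M]^{d-1}$, the polynomial
\[
Q_{\bj'}(z):=\sum_{\ell=0}^{u} y_{p+\ell}(\bj')\, z^{\ell}
\]
satisfies $\zeta^{\,p j_1}\,Q_{\bj'}(\zeta^{j_1})=0$ for every $j_1\in[q:q+v]$, i.e. $Q_{\bj'}$ vanishes at the $v+1$ points $\zeta^{q},\zeta^{q+1},\dots,\zeta^{q+v}$.

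Next I would finish the one-dimensional step: $Q_{\bj'}$ has degree at most $u$, the points $\zeta^{q},\dots,\zeta^{q+v}$ are $v+1$ \emph{distinct} $2M$-th roots of unity, and $v>u$, so $Q_{\bj'}$ has strictly more than $\deg Q_{\bj'}$ distinct zeros and is therefore identically zero. Hence $y_{k_1}(\bj')=0$ for all $k_1\in[p:p+u]$ and all $\bj'\in[1-M:M]^{d-1}$; since $\bj'\mapsto y_{k_1}(\bj')$ is obtained from $\bk'\mapsto x_{(k_1,\bk')}$ by an invertible DFT, we get $x_{(k_1,\bk')}=0$ for all such $k_1,\bk'$, and in particular $\bx=\bzero$ on $W$. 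This proves that $\cF^*_{R,W}$ has trivial kernel (and, as noted in the text, that $\cF_{W,R}\cF^*_{R,W}$ is invertible).

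The only point requiring a little care is the distinctness of the sampling nodes $\zeta^{j_1}$, $j_1\in[q:q+v]$: this needs $v+1\le 2M$, which holds because the slab $[q:q+v]\times[1-M:M]^{d-1}$ must sit inside $J=[1-M:M]^d$, i.e. $[q:q+v]\subseteq[1-M:M]$. I do not expect any genuine obstacle; the substance of the theorem is the dimensional reduction, after which it is the classical statement that a degree-$u$ polynomial cannot have $u+1$ or more distinct zeros.
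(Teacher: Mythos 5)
Your proof is correct and follows essentially the same route as the paper's: separate the first coordinate from the remaining $d-1$, use invertibility of the $(d-1)$-dimensional DFT to reduce to univariate polynomials of degree at most $u$ sampled at the $v+1>u$ distinct roots of unity $\zeta^{q},\dots,\zeta^{q+v}$, and conclude that they, and hence $\bx$, vanish. The only differences are cosmetic --- you apply the polynomial root-counting step before inverting the partial DFT rather than after, and you spell out the distinctness of the sampling nodes, which the paper leaves implicit.
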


\begin{proof}
Let $\bx\in \bbR^J$ be a vector, with support in $W,$ that belongs to
the the null-space of $\cF^*_{R,W},$ and let $X(\bz)$ be its
$Z$-transform.  For every frequency $\bk$, the adjoint DFT,
$\check{\bx}_{\bk},$ equals $ X(\bomega_{\bk})$, for $\bomega_{\bk}$
an appropriate vector of points on the torus $(S_1)^d$.  We can
rewrite the $Z$-transform as
\[ X(\bz)=\sum_{\bj'\in [1-M:M]^{d-1} } p_{\bj'}(z_1)(z_2,\dots,z_d)^{\bj'}.
\]
Up to a factor of $z_1^p$, each $p_{\bj'}(z_1)$ is a polynomial of degree $u$. 

The hypothesis of the theorem implies that for any 
$\bomega_{\bk}=(\omega_{k_1},\dots,\omega_{k_d})$ with $k_1\in [q:q+u]$, 
we have that $X(\bomega_{\bk})=0$.
By the invertibility of the $(d-1)$-dimensional DFT, this implies that
$p_{\bj'}(\bomega_{k_1})=0$ for all $\bj'$ and $k_1\in [v:q+v]$. 
Because $v>u$ and $p_{\bj'}$ are polynomials of degree $u$, 
this shows that the polynomials are actually all zero, which, 
in turn, implies that $\bx=\bzero$ as well.
\end{proof}

As $|W|$ is a reasonably small number, the reduced SVD of 
$$\cF^*_{R,W} =U\Sigma V^*$$
is fairly easy to compute.  The Moore-Penrose inverse of $\cF^*_{R,W}$
is
\begin{equation}
 \cF^{*\dag}_{R,W}=V\Sigma^{-1} U^*.
\end{equation}
The unique solution to the overdetermined linear
system in~\eqref{lsqprob} is given by
\begin{equation}\label{eqn8.02}
   \balpha_W= - \cF^{*\dag}_{R,W}\cF^*_{R,W^c} \ba^2_{W^c}.
\end{equation}
We call the operator
\begin{equation}\label{eqn9.02}
  \cR_{R,W}=- \cF^{*\dag}_{R,W}\cF^*_{R,W^c}
\end{equation}
the \emph{recovery operator}.  For general right hand sides, $\by,$
the solution to the least squares problem is given by $\cR_{R,W}\by.$
It should be noted that the recovery operator only depends on $W,J,R,$
and is independent of the particular image being reconstructed.

\begin{figure}[H]
  \centering
       \includegraphics[width=12cm]{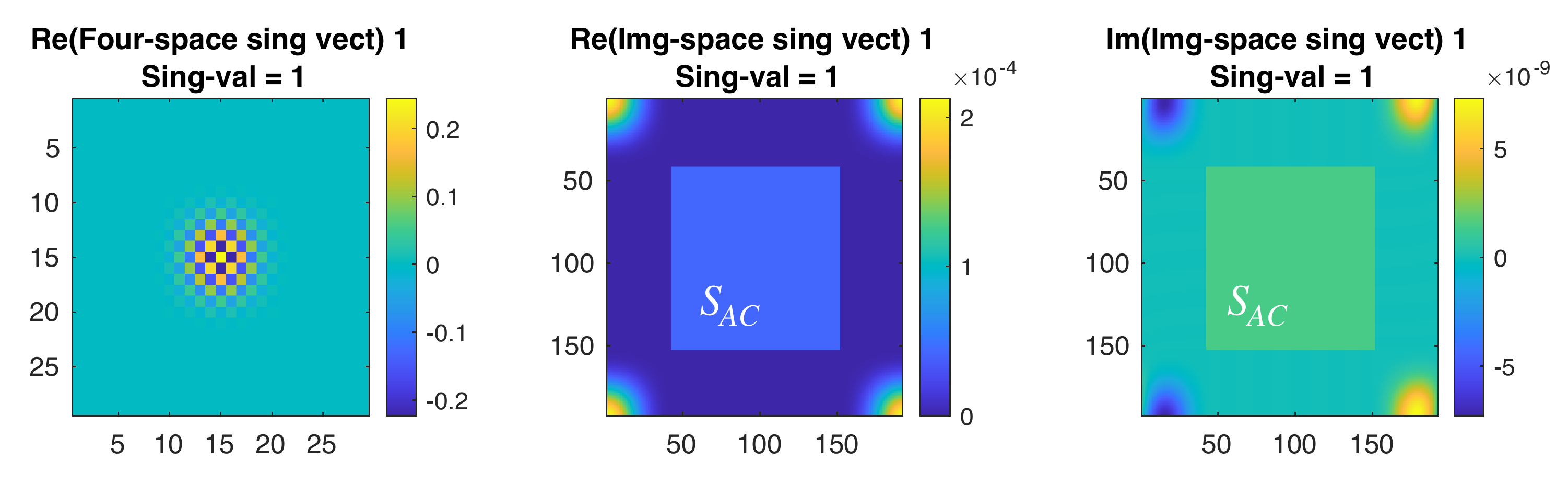}
        \caption{The singular vector $\bu_1$ of $\cF^*_{R,W}$,
          corresponding to the largest
          singular value 1. On the left is the DFT representation, 
          showing a small
          neighborhood of $W$. In the middle is a plot of
          $\Re(\cF^*(\bu_1))$ and on the right is a plot of 
          $\Im(\cF^*(\bu_1)).$
          The set $S_{AC}$ is indicated in the middle and
          right panels as a lightly shaded rectangle.}\label{sv1fig}
\end{figure}

       \begin{figure}[H]
        \centering
       \includegraphics[width=12cm]{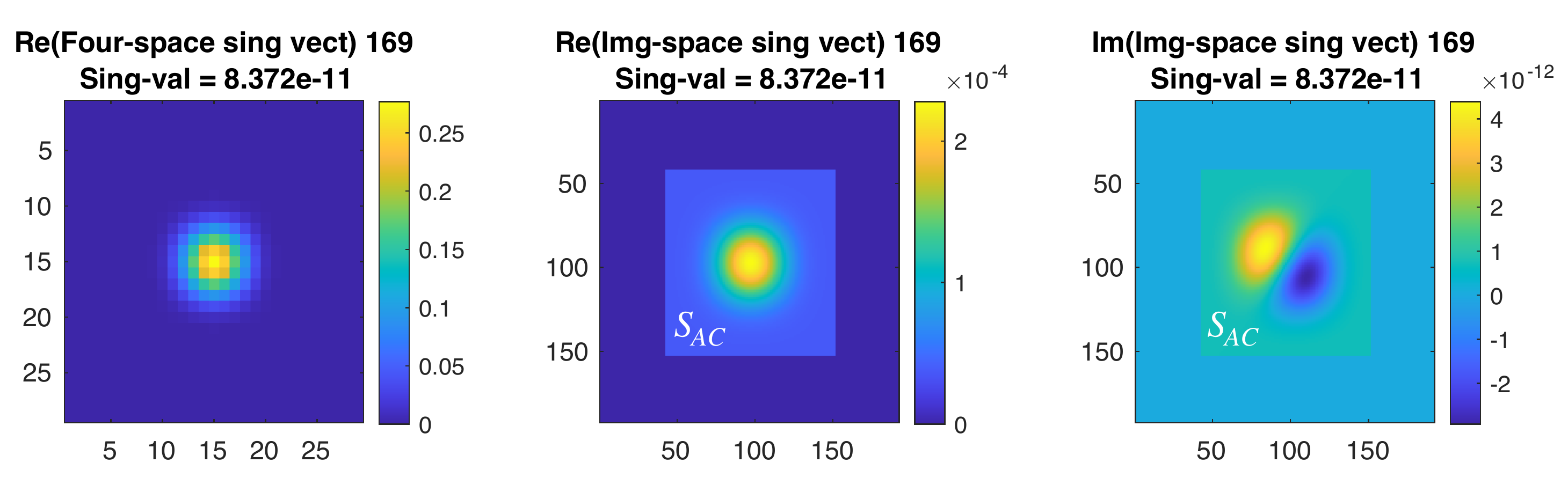}
        \caption{The singular vector
          $\bu_{169}$ of $\cF^*_{R,W}$, corresponding to the smallest
          singular value $9.15\times10^{-6 }.$ On the left is the DFT
          representation, showing a small neighborhood of $W.$
          In the middle is a plot of $\Re(\cF^*(\bu_1))$ and on
          the right is a plot of 
          $\Im(\cF^*(\bu_1)).$ The set $S_{AC}$ is indicated
          in the middle and
          right panels as a lightly shaded rectangle.}\label{sv0fig}
       \end{figure}

Since $\cF^*_{R,W}$ is the composition of the unitary map $\cF^*$ with
orthogonal projections, its singular values lie between $0$ and $1.$
It is straightforward to describe the sorts of images that lead to
singular vectors with singular values very close to 1, or very close
to 0.  In order for $\bu\in\bbC^J$ to satisfy $|\cF^*_{R,W}\bu|
\approx |\bu|,$ it is necessary for $\bu$ to be supported in $W$ and
for $\cF^*(\bu)$ to be almost entirely supported in $R.$ An example is
shown in Figure~\ref{sv1fig}.  The larger $R$ is, the easier it is to
find such images.

On the other hand, for $\cF^*_{R,W}\bu\approx \bzero$ it is necessary
for $\bu$ to be supported in $W$ and $\cF^*(\bu)$ to be supported
almost entirely in $J\setminus R.$ In $2d,$ these images resemble
tensor products of sampled Hermite functions. For a fixed $W$, such
vectors become more plentiful as $R$ gets smaller. An example is shown
in Figure~\ref{sv0fig}. For these examples we use a thrice oversampled
$192\times 192$ grid; $W$ is a $13\times 13$ square centered on
$\bk=(0,0),$ and $|R|=24,765.$ In most practical examples the largest
singular value of $\cF^*_{R,W}$ is very close to $1.$ In this example,
the ratio of the largest to smallest singular value of $\cF^*_{R,W}$
is $=1.0929\times 10^5.$ This quantity represents the conditioning of
the problem of recovering the samples of magnitude DFT in $W,$ and is
also the norm of $\cF^{*\dag}_{R,W}.$ In Section~\ref{sec3.01} we give
estimates and asymptotic results for the conditioning of this problem.

From Fig.~\ref{sv0fig}, we see that the singular vector with
the smallest singular value is essentially a Gaussian centered
at $\bzero.$ In fact, this vector turns out to provide the most
important contribution to ``filling the hole'' in $\bk$-space.
This is easily understood in terms of the continuum model
embodied in equations~\eqref{eqn1} and~\eqref{eqn2.0}. Since
$\rho$ is compactly supported, its Fourier transform is smooth
and has a Taylor expansion about zero,
$\hrho(\bk)=\hrho(\bzero)+\langle
\nabla\hrho(\bzero),\bk\rangle+\frac{1}{2} \langle
H_{\hrho}(\bzero)\bk,\bk\rangle+O(\|\bk\|^3),$ where
$H_{\hrho}(\bzero)$ is the matrix of second derivatives of
$\hrho$ at $\bzero.$ For $\rho$ a real valued function this
implies that
\begin{equation}
|\hrho(\bk)|^2=|\hrho(\bzero)|^2\exp(-\langle B\bk,\bk\rangle)+O(\|\bk\|^4),
\end{equation}
where
\begin{equation}
\langle B\bk,\bk\rangle=\frac{1}{|\hrho(\bzero)|^2}\left[|\langle\nabla\hrho(\bzero),\bk\rangle|^2-\hrho(\bzero)\langle
H_{\hrho}(\bzero)\bk,\bk\rangle\right].
\end{equation}
For the sort of functions that arise in CDI, the zero Fourier coefficient
$\hrho(\bzero)$ is much larger than any other. The analysis above
shows that, near to $\bk=\bzero,$ the function $|\hrho(\bk)|^2$ strongly resembles a
Gaussian, as does the singular vector of $\cF^*_{R,W}$ with the
smallest singular value.  As we see in the next example, this singular
vector plays a dominant role in filling in the unmeasured magnitude DFT
data. 

\begin{example}
 Let $\{\bv_l:\: l=1,\dots 169\}$ denote the right singular vectors defined by
the matrix $\cF^*_{R,W}$ used in Figures~\ref{sv1fig} and~\ref{sv0fig}, with
the corresponding singular values $\{\sigma_l\}$ in decreasing
order. The solution to equation~\eqref{lsqprob} can then be represented as
\begin{equation}\label{eqn10}
\balpha_W=\sum_{l=1}^{169}c_j\bv_j.
\end{equation}
Figure~\ref{svcfig}[a] shows the coefficient vector $\bc,$ defined by
a non-negative image similar to those used in Example~\ref{exmpl2},
and Figure~\ref{svcfig}[b] shows the coefficient vector defined by an
image having both signs, but still having a large mean value. From
these plots it is quite apparent that $c_{169}$ is nearly an order of
magnitude larger than any other coefficient.
\begin{figure}[H]
\centering
\begin{subfigure}{.45\textwidth}
\includegraphics[width=6cm]{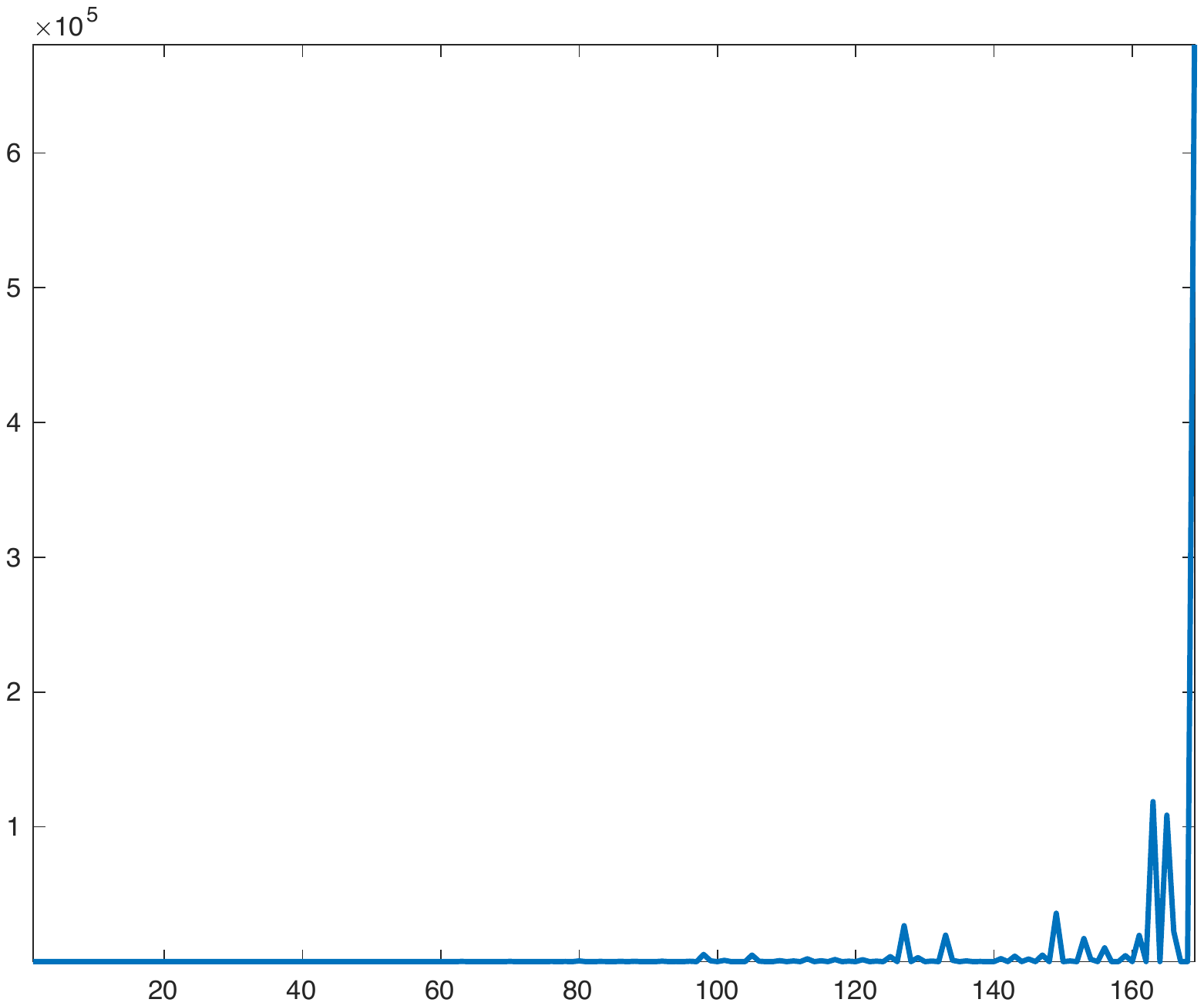}
\caption{Coefficients defined by a non-negative image.}
\end{subfigure}\qquad
\begin{subfigure}{.45\textwidth}
\includegraphics[width=6cm]{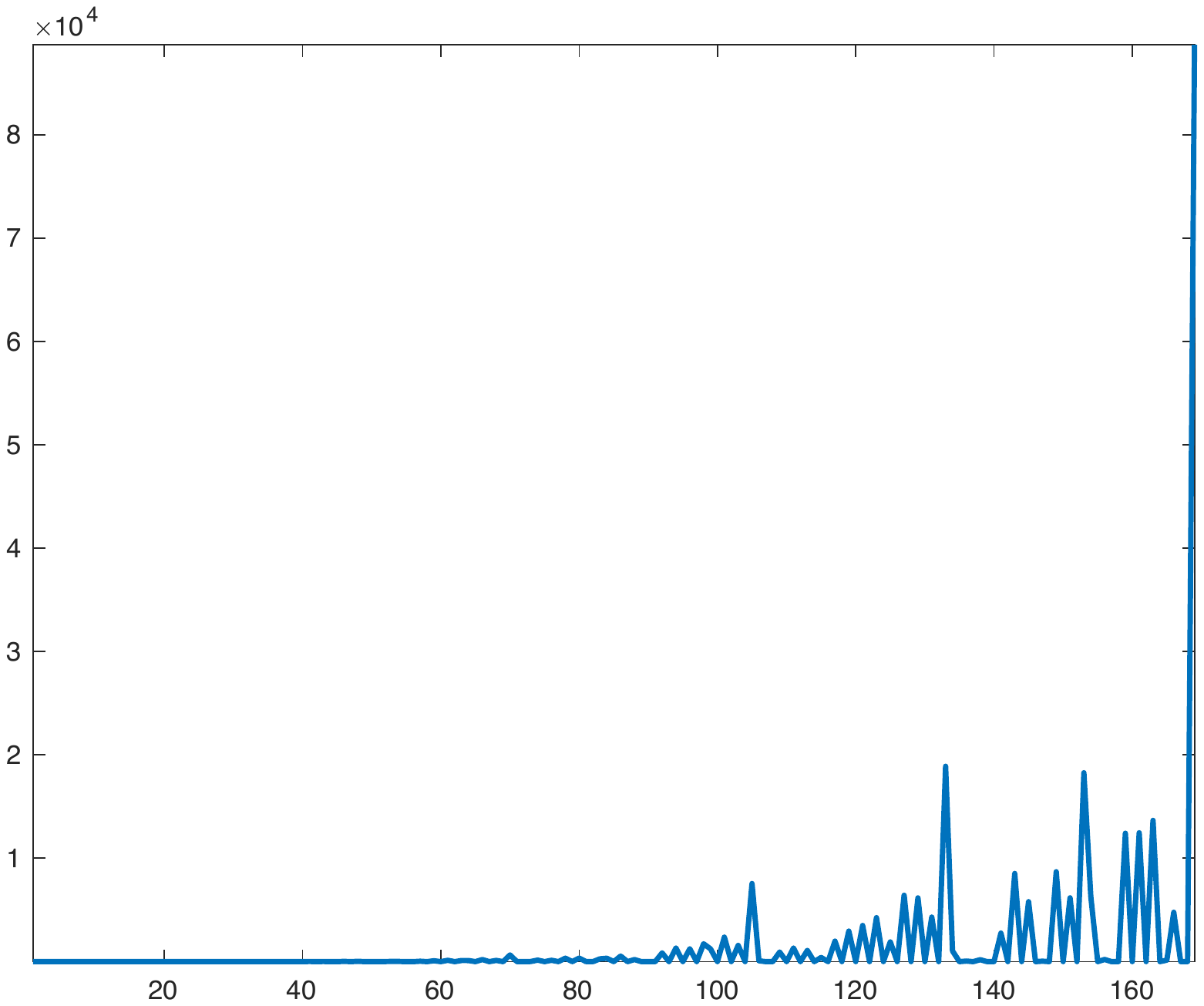}
\caption{Coefficients defined by an image with both signs.}
\end{subfigure}
\caption{The coefficient vectors from equation~\eqref{eqn10} defined by  real
images.}\label{svcfig}
\end{figure}
\end{example}

\section{The Norm of the Recovery Operator}\label{sec3.01}
 The recovery operator is defined in~\eqref{eqn9.02} as the
 composition of $\cF^{*\dag}_{R,W},$ the
Moore-Penrose inverse of
 $\cF^*_{R,W},$ with $\cF^*_{R,W^c}.$ The operator $\cF^*_{R,W^c}$ is
 a composition of orthogonal projections with the unitary operator
 $\cF^*,$ and therefore its norm is bounded by $1.$ Let $\sigma_1\geq
 \sigma_2\geq\cdots\geq\sigma_{|W|}$ denote the singular values of
 $\cF^*_{R,W}$ in decreasing order.
 The norm of $\cR_{R,W}$ is therefore bounded above by
 $\sigma_{|W|}^{-1},$ but, in fact, may be smaller.

 In this section, we restrict our attention to the case that $W$ is a
 square subregion of $J$ and $R$ is the \emph{complement} of the
 rectangular subregion $S_{AC}=S\ominus S$ within the field of view.
 Over the years, a great deal of effort has been expended to
 understand the singular values of operators like $\cF^*_{S_{AC},W},$
 a field of research that goes, at least in continuum case, under the
 rubric of ``prolate spheroidal functions,''
 see~\cite{SlepianPollack,Slepian1978,Fuchs1964}.  Because $\cF^*$ is
 a unitary map, and $R=S_{AC}^c,$ it follows that
\begin{equation}
  \|\balpha_W\|^2=\|\cF^*_{R,W}\balpha_W\|^2+\|\cF^*_{S_{AC},W}\balpha_W\|^2,
\end{equation}
and therefore:
\begin{equation}\label{eqn10.01}
\mu_0(R,W,d)=\sigma_{|W|}^2=  \min_{\balpha_W\neq
  \bzero}\frac{\|\cF^*_{R,W}\balpha_W\|^2}
   {\|\balpha_W\|^2}=
  1-\max_{\balpha_W\neq \bzero}\frac{\|\cF^*_{S_{AC},W}\balpha_W\|^2}{\|\balpha_W\|^2}.
\end{equation}
That is, there is a simple relationship between the smallest singular
value of $\cF^*_{R,W}$ and the largest singular value of
$\cF^*_{S_{AC},W}.$ In fact this is a special case of the following
theorem:
\begin{theorem}\label{thm1}
  Let $K, L\subset J$ and assume that $|K|\leq |L|.$ We let $\{\sigma_j\}$
  denote the singular values of $\cF^*_{L,K}$ in decreasing order
  and $\{\tau_{j}\}$ the singular values of $\cF^*_{L^c,K},$  also in
  decreasing order. If $p=|K|,$ then, for $1\leq j\leq p,$
  \begin{equation}
    \sigma_j^2=1-\tau_{p-j+1}^2.
  \end{equation}
\end{theorem}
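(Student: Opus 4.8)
The plan is to reduce the statement to a one-line spectral fact, exploiting the unitarity of $\cF^*$ together with the orthogonal decomposition $\bbC^J = \bbC^L \oplus \bbC^{L^c}$ of the frequency-domain coordinate space.

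First I would identify $\bbC^K$ with the subspace of $\bbC^J$ consisting of vectors supported on $K$. For $\balpha$ in this subspace, unitarity gives $\|\balpha\|^2 = \|\cF^*\balpha\|^2$, and resolving $\cF^*\balpha$ along the coordinate subspaces indexed by $L$ and $L^c$ yields
\[
\|\balpha\|^2 = \|\cF^*_{L,K}\balpha\|^2 + \|\cF^*_{L^c,K}\balpha\|^2 .
\]
This is exactly the identity already recorded in~\eqref{eqn10.01} for the special case $L = S_{AC}$; the point is that it holds verbatim for arbitrary $K, L \subset J$. Rephrased in terms of Gram matrices, the two positive semidefinite $p\times p$ matrices $A := (\cF^*_{L,K})^*\cF^*_{L,K}$ and $B := (\cF^*_{L^c,K})^*\cF^*_{L^c,K}$ satisfy $A + B = \Id_{\bbC^K}$.

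Next, since $B = \Id - A$, every eigenvector of the Hermitian matrix $A$ with eigenvalue $\lambda$ is an eigenvector of $B$ with eigenvalue $1-\lambda$. Choosing an orthonormal eigenbasis $v_1,\dots,v_p$ of $\bbC^K$ for $A$, the eigenvalues of $A$ are the squared singular values $\sigma_1^2 \geq \cdots \geq \sigma_p^2$ of $\cF^*_{L,K}$ (there are exactly $p = |K| \leq |L|$ of them), and on the same basis the eigenvalues of $B = \Id - A$ are $1 - \sigma_1^2 \leq \cdots \leq 1 - \sigma_p^2$; these are precisely the squared singular values $\tau_j^2$ of $\cF^*_{L^c,K}$ listed in some order, with zeros appended if $|L^c| < p$ so that $B$ contributes $p$ eigenvalues counted with multiplicity. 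Sorting the $\tau_j^2$ in decreasing order reverses this list, so $\tau_{p-j+1}^2 = 1 - \sigma_j^2$, i.e. $\sigma_j^2 = 1 - \tau_{p-j+1}^2$ for $1 \leq j \leq p$.

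There is no real analytic difficulty here; the only thing to be careful about is the bookkeeping — asserting $A + B = \Id$ on the correct $p$-dimensional space, and fixing the convention that $\cF^*_{L^c,K}$ is assigned $p$ singular values (padding with zeros when $|L^c| < p$), so that the index $p-j+1$ always makes sense. With that convention the argument is immediate, and the special case~\eqref{eqn10.01} follows by taking $K = W$, $L = S_{AC}$, $L^c = R$, and reading off $j = p$.
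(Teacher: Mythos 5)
Your argument is correct. It rests on the same Pythagorean identity $\|\balpha\|^2=\|\cF^*_{L,K}\balpha\|^2+\|\cF^*_{L^c,K}\balpha\|^2$ that the paper uses, but you convert it into the conclusion by a different mechanism: you pass to the Gram matrices $A=(\cF^*_{L,K})^*\cF^*_{L,K}$ and $B=(\cF^*_{L^c,K})^*\cF^*_{L^c,K}$, note $A+B=\Id_{\bbC^K}$, and simultaneously diagonalize, so that the eigenvalue lists $\{\sigma_j^2\}$ and $\{1-\sigma_j^2\}$ are read off from a single orthonormal eigenbasis and the order reversal comes from sorting. The paper instead feeds the same identity into the two Courant--Fischer variational characterizations of $s_j^2$ (min-max and max-min over $j$-dimensional subspaces), turning a min-max for $\cF_{L,K}$ into a max-min for $\cF_{L^c,K}$ and matching indices that way. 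Your route is the more elementary of the two: it avoids the variational machinery entirely and makes the eigenvector correspondence explicit, at the small cost of the bookkeeping you already flag (padding $\cF^*_{L^c,K}$ with zero singular values when $|L^c|<p$, which corresponds to $\sigma_j=1$ eigenvectors of $A$). The paper's variational argument generalizes more readily to situations where the two quadratic forms do not commute but still sum to something controlled; here, where they sum exactly to the identity, nothing is lost by your simpler approach.
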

\noindent
Note that $K,L$ are arbitrary subsets of $J$ subject to the
requirement that $|K|\leq |L|.$ The proof of the theorem is given in Appendix~\ref{A2}.

This theorem is very useful in the present setting,
where $W$ is a rectangular region and $R$ the
complement of a rectangular region: it allows us to reduce the
analysis of the singular values of $\cF^*_{R,W},$ to the case of
$\cF^*_{S_{AC},W}.$ 
Recalling that $w = \lfloor{ 1+ m k_0} \rfloor$,
with $W=[1-w:w-1]^d$ and $S_{AC}=[-\beta
  N:\beta N]^d,$ we let
$\{\tau_{j,d}\}$ be the singular values of $\cF^*_{S_{AC},W},$
in the $d$-dimensional case, and $\{\sigma_{j,d}\},$ the singular
values of $\cF^*_{R,W}.$ Because the $d$-dimensional DFT is the
$d$-fold tensor product of 1-dimensional transforms, it is not
difficult to show that
\begin{equation}
  \tau_{1,d}=\tau_{1,1}^d.
\end{equation}
If $q=|W|,$ then Theorem~\ref{thm1} implies that
$$\sigma_{q,d}=\sqrt{1-\tau_{1,d}^2}.$$
As follows from the analysis in Appendix~\ref{A1},
$\tau^2_{1,1}=(1-\epsilon),$ for an $\epsilon <\!<1,$ which
implies that
\begin{equation}
  \tau^2_{1,d}=(1-\epsilon)^d =1-d\epsilon+O(\epsilon^2),
\end{equation}
and therefore
\begin{equation}
  \sigma_{q,d}=\sqrt{1-\tau^2_{1,d}}\approx \sqrt{1-(1-d\epsilon)}\approx \sqrt{d\epsilon}.
\end{equation}
The norm of the operator of interest in the hole-filling-problem is
given approximately by:
\begin{equation}\label{eqn21.003}
[\mu_0(S_{AC},W,d)]^{-\frac 12}=  \sigma^{-1}_{q,d}\approx\frac{1}{\sqrt{d\epsilon}}.
\end{equation}

In Appendix~\ref{A1} we show how to get an asymptotic estimate for the quantity
$\mu_0(S_{AC},W,1),$ which depends only the ``space-bandwidth''
product, $\beta k_{0}.$ 
Asymptotically, as 
$\beta k_{0},m,N\to\infty,$ we show that
\begin{equation}\label{eqn11.007}
  \epsilon=\mu_0(S_{AC},W,1)\sim  4\pi \sqrt{\beta k_{0}}e^{-2\pi \beta k_{0}} .
\end{equation}
This formula, along with~\eqref{eqn21.003} imply the asymptotic formula:
\begin{equation}\label{eqn22.003}
 \|\cR_{W,R}\|\sim \frac{e^{\pi\beta k_{0}}}{\sqrt{4\pi d}[\beta
     k_{0}]^{\frac 14}}.
\end{equation}
Note that the exponent in~\eqref{eqn22.003} does
not depend on the dimension.

The recent analysis in~\cite{BarnettDFT2020} gives a lower bound,
which is slightly different, indicating that
increased sampling, and oversampling might have the effect of
slightly decreasing the norm of $\cR_{W,R}.$  A result of Slepian
(reproduced in~\cite{BarnettDFT2020})
shows, that as $m\to\infty,$
\begin{equation}
   \|\cR_{W,R}\|\sim
   \frac{C_{k_0,m,N}}{\sqrt{d}}\left[\frac{1+\tan\left(\frac{\pi\beta}{4m}\right)}
       {1-\tan\left(\frac{\pi\beta}{4m}\right)} \right]^{2mk_{0}+1},
\end{equation}
see~\cite{Slepian1978}. Here $C_{k_0,m,N}$ is an algebraic factor. 
As $m\to\infty$ this formula
gives the same exponential rate as~\eqref{eqn22.003}.

It is worth noting that the
exponential rate in the conditioning of the hole-filling problem does
\emph{not} depend on the dimension. In fact the condition number
should decrease, albeit slowly, as the dimension increases. The
computations in Example~\ref{exmpl3.01} show that~\eqref{eqn22.003}
is fairly accurate, even for moderate values of $\beta k_{0},$
$N,$ and  $m\geq 3.$ The main lessons of this analysis are:
\begin{enumerate}
\item The size of the hole in $\bk$-space that can be stably filled using
  the linear method we have introduced depends mostly on the product $\beta
  k_{0}.$
  \item The norm of recovery operator grows exponentially with this
    product. Recalling that $1<\beta<2,$ the size of the hole, as
    measured by $k_{0},$ that can be filled in this way is quite
    limited. However, the exponential rate does \emph{not} depend on the dimension!
    \end{enumerate}

\noindent
In the examples in the next section we see that, for a given $\beta$
and $k_{0},$ larger values of $m$ do provide a better result, though
with little improvement beyond $m=4.$

\section{Examples}\label{exmpl.sec}
We now consider several examples that illustrate the performance of
this method on $2d$-images, and the dependence of $\|\cR_{W,R}\|$ on
$m,N,k_{0},$ and $\beta.$ It should be recalled that oversampling
is a matter of changing the spacing between the samples collected in
$\bk$-space, and not the maximum frequency collected. As follows
from~\eqref{eqn2.0}, the double--oversampled Fourier coefficient with
indices $(2k_1,2k_2)$ is at the same spatial frequency as the
triple--oversampled coefficient with indices $(3k_1,3k_2).$

\begin{example}\label{exmpl2}
  For these examples we use an image, $\brho,$ taking both signs that
  sits in a $64\times 64$-rectangle. The function sampled is twice
  differentiable; for the estimate of the support $S,$ we use the
  1-pixel neighborhood of the smallest rectangle that contains
  $\supp\brho.$ We use either double, $|J|=128\times 128,$ or triple,
  $|J|=192\times 192,$ oversampling, and remove neighborhoods, $W,$ of
  $\bzero$ in $\bk$-space of various sizes.  In all cases we solve for
  the missing values using~\eqref{eqn8.02}.

  Figure~\ref{fig1} shows the results with double oversampling and
  Figure~\ref{fig2}, the results with triple oversampling.  The plots
  in the upper left corners show the singular values, in decreasing
  order, of $\cF^*_{R,W}.$ The plots in the upper right corners show
  the set $R$ in yellow. The support of $\brho$ is contained in the
  union of the light blue and dark blue rectangles and the hole in
  $\bk$-space is dark blue.  The plots in the lower left corners are
  the recovered magnitude-DFT coefficients in $W$ of the
  autocorrelation function, using the values found in~\eqref{eqn8.02}
  to ``fill the hole.'' The errors in the autocorrelation images are
  shown in the lower right corners.

 \begin{figure}[H]
  \centering
   \begin{subfigure}[H]{.45\textwidth}
        \centering
       \includegraphics[width=6.5cm]{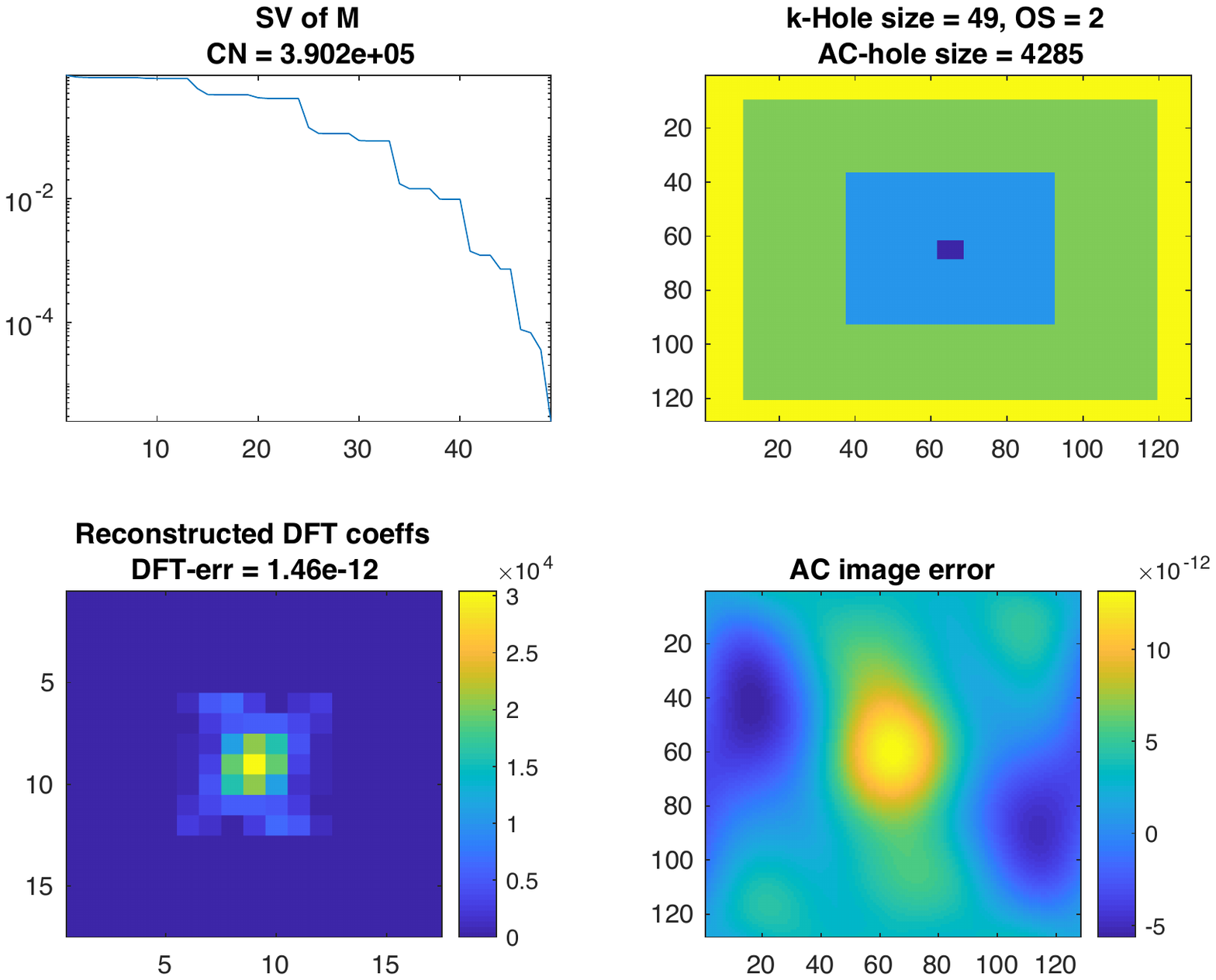}
        \caption{$W$ is a $7\times 7$ square.}
   \end{subfigure}\quad
       \begin{subfigure}[H]{.45\textwidth}
        \centering
       \includegraphics[width=6.5cm]{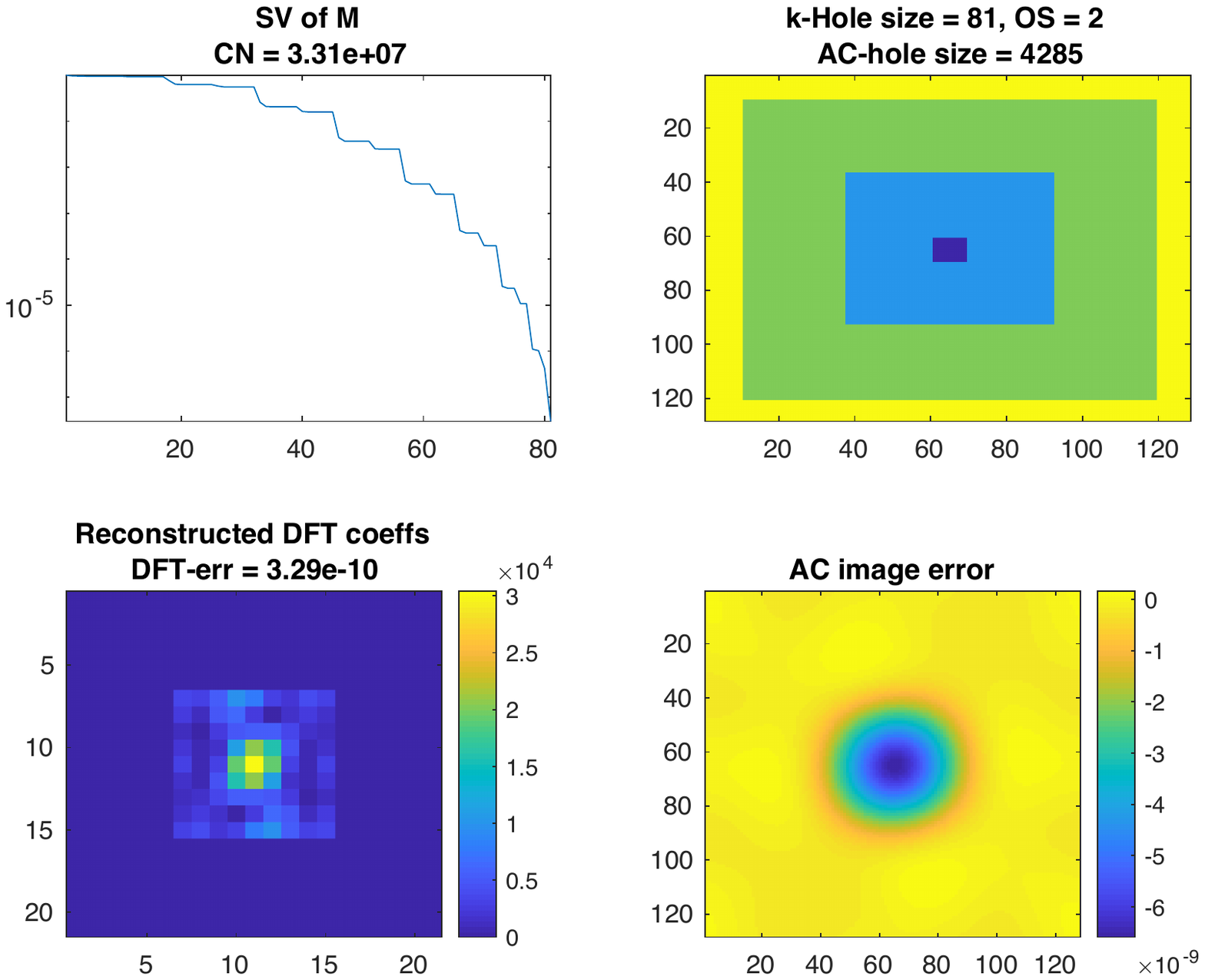}
        \caption{$W$ is a $9\times 9$ square.}
    \end{subfigure}\qquad
        \caption{Plots connected with the recovery of missing samples of the
          magnitude DFT data using double-oversampling.}\label{fig1}
  \end{figure}

    With triple oversampling we can recover the data with 11 digits of accuracy
    in a fairly large hole ($15\times 15$-hole in a $192\times 192$ grid), and the
    matrix $\cF^*_{R,W}$ has most of its singular values close to $1.$ With
    double oversampling the conditioning of the matrix $\cF^*_{R,W}$
    deteriorates more quickly.

     \begin{figure}[H]
  \centering
   \begin{subfigure}[H]{.45\textwidth}
        \centering
       \includegraphics[width=6.5cm]{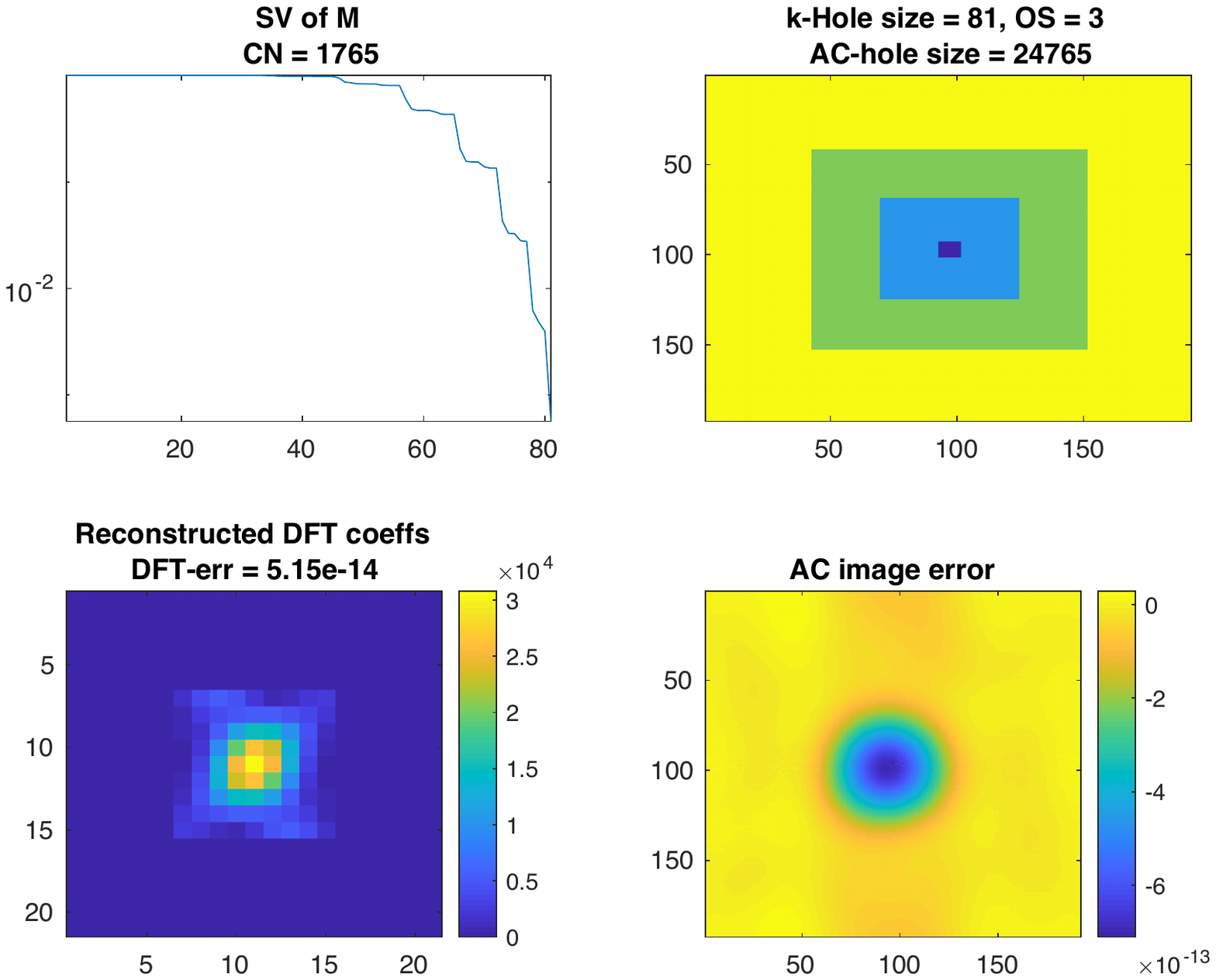}
        \caption{$W$ is a $9\times 9$ square.}
    \end{subfigure}\qquad
       \begin{subfigure}[H]{.45\textwidth}
        \centering
       \includegraphics[width=6.5cm]{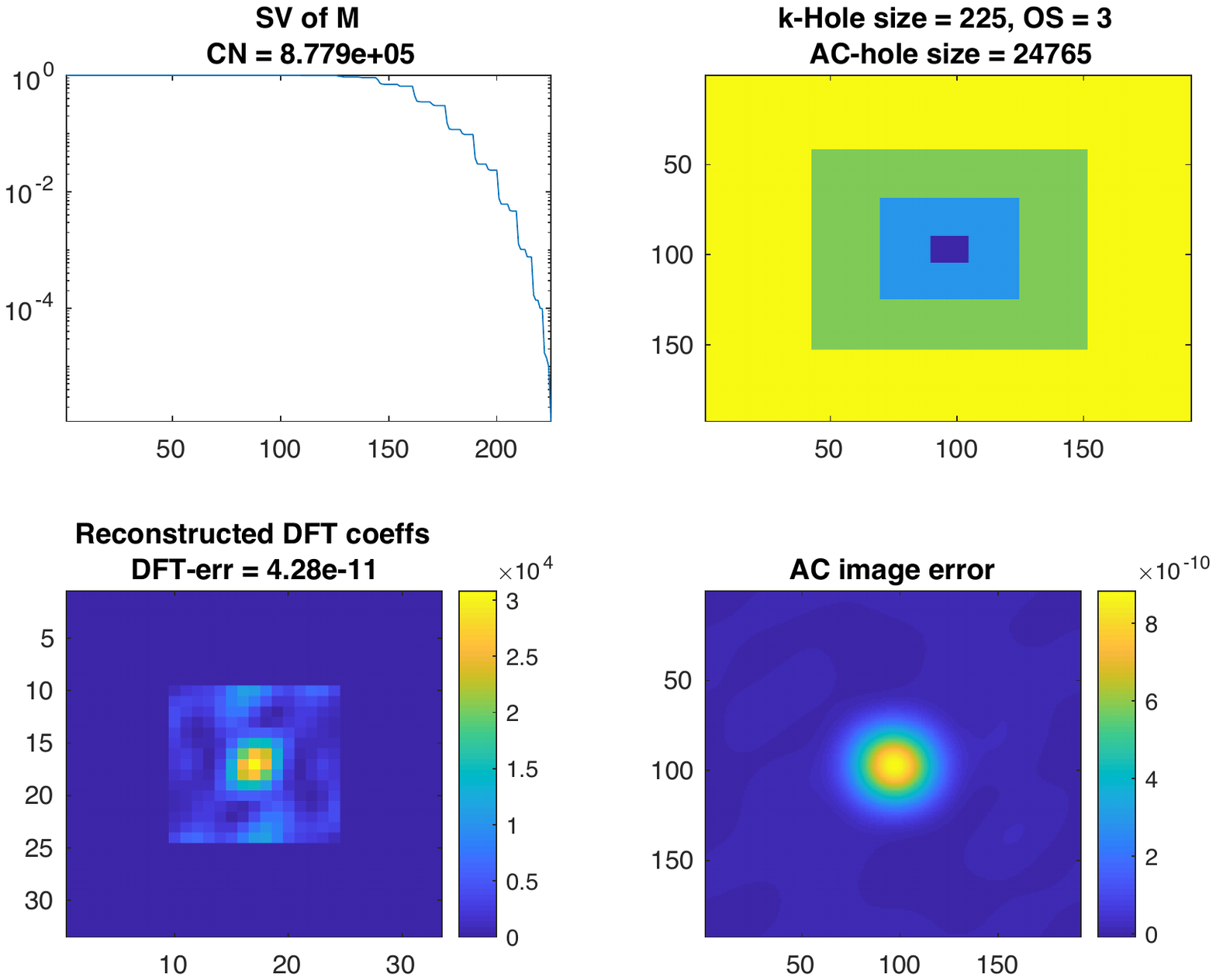}
        \caption{$W$ is a $15\times 15$ square.}
    \end{subfigure}\qquad
        \caption{Plots connected with the recovery of missing samples of the
          magnitude DFT data using triple-oversampling.}\label{fig2}
     \end{figure}
     
\end{example}

We now make a systematic study of the dependence of $\|\cR_{R,W}\|$ on
the various parameters that define this operator: $m,$ the degree of
oversampling, $N,$ the ``base'' number of samples, $k_{0}$ the
maximum spatial frequency not sampled. For these examples we fix
$\beta=1.5,$ so that the autocorrelation image is supported in
$[-1.5N:1.5N]^2.$ As predicted from the asymptotic
formula,~\eqref{eqn22.003}, the norm of $\cR_{R,W}$ increases
monotonically with $\beta.$
  
\begin{example}\label{exmpl3.01} 
  In these examples the images are indexed by $J=[1-mN:mN]^2,$ the
  image itself is supported in a proper subset of $[-N:N]^2,$ its
  autocorrelation image is supported in $[-1.5 N,1.5 N]^2,$ and
  samples of the magnitude DFT with indices in $[1-w:w-1]^2,$ are
  \emph{not} measured. As before, $w= \lfloor{1 + mk_0}\rfloor$.

  The asymptotic formula in~\eqref{eqn22.003} is expected to become
  increasingly accurate as $m,N$ grow. In fact, taking $N=128,$ and
  $m=3$ already leads to fairly good agreement with this estimate. To
  generate the tables below we fix $\beta =1.5$ and consider various
  values of $m,k_{0}$ for $N=64,128,256.$ Taking $m=3$ results in a
  large improvement over taking $m=2,$ but $m=4$ only provides a small
  improvement over $m=3.$

  \begin{table}[H]
      \footnotesize
    \makebox[\linewidth]{
\begin{tabular}{|l|l|l|l|l|l|}
\hline
 $N=64$       & $m=2$                & $m=3$               & $m=4$                       & Asymp. Val.       \\ \hline
$k_{0}=1$ & $337.8$              & $70.87$              & $45.4$                       & $20.06$           \\ \hline
$k_{0}=2$ & $1.73\times 10^5$    & $1.12\times 10^4$   & $5.42\times 10^3$  & $1.88\times 10^3$ \\ \hline
$k_{0}=3$ & $1.08\times 10^8$    & $1.99\times 10^6$   & $7.25\times 10^5$  & $1.89\times 10^5$ \\ \hline
$k_{0}=4$ & $8.61\times 10^{10}$ & $3.89\times 10^8$   & $1.05\times 10^8$   & $1.96\times 10^7$ \\ \hline
$k_{0}=5$ & $9.52\times 10^{13}$ & $8.44\times 10^{10}$ & $1.65\times 10^{10}$ & $2.06\times 10^9$ \\ \hline
\end{tabular}
}
\caption{Values  of $\|\cR_{R,W}\|$ for $\beta=1.5,$ $N=64$ and
  various choices of $m,k_{0}.$ The asymptotic values predicted
  by~\eqref{eqn22.003} are shown in the last column.}\label{tab1.0}
  \end{table}

  The following tables are generated with $N=128,$ and $N=256.$ The values in this 
   table that overlap with those in Table~\ref{tab1.0} are quite
   similar, with generally smaller values than for $N=64.$

   \begin{table}[H]
       \footnotesize
   \centering
     \begin{tabular}{|l|l|l|l|l|
       }
\hline
 $N=128$       & $m=2$                & $m=3$               & $m=4$              & Asymp. Val.       \\ \hline
$k_{0}=1$ & $361.03$              & $73.31$              & $46.7$              & $20.06$           \\ \hline
$k_{0}=2$ & $1.843\times 10^5$    & $1.17\times 10^4$   & $5.67\times 10^3$  & $1.88\times 10^3$ \\ \hline
$k_{0}=3$ & $1.05\times 10^8$    & $2.05\times 10^6$   & $7.52\times 10^5$  & $1.89\times 10^5$ \\ \hline
$k_{0}=4$ & $6.45\times 10^{10}$ & $3.77\times 10^8$   &
$1.04\times 10^8$   & $1.96\times 10^7$ \\ \hline\hline
$N=256$       & $m=2$                & $m=3$               & $m=4$              & Asymp. Val.       \\ \hline
$k_{0}=1$ & $373.17$              & $74.63$              & $47.4$              & $20.06$           \\ \hline
$k_{0}=2$ & $1.94\times 10^5$    & $1.21\times 10^4$   & $5.81\times 10^3$  & $1.88\times 10^3$ \\ \hline
$k_{0}=3$ & $1.09\times 10^8$    & $2.13\times 10^6$   & $7.76\times 10^5$  & $1.89\times 10^5$ \\ \hline
$k_{0}=4$ & $6.45\times 10^{10}$ & $3.88\times 10^8$   & $2.96\times 10^7$   & $1.96\times 10^7$ \\ \hline
     \end{tabular}
\caption{Values  of $\|\cR_{R,W}\|$ for $\beta=1.5,$ $N=128,256$ and
  various choices of $m,k_{0}.$ The asymptotic values predicted
  by~\eqref{eqn22.003} are shown in the last column.}
\end{table}
\end{example}
  
To close this section we consider the relationship in the errors of
the recovered
DFT magnitude data, versus that in the squared magnitude data. We
express the recovered autocorrelation magnitude data, $\{u^2_{\bk}:\:
\bk\in W\},$ as
\begin{equation}
  u_{\bk}^2=|\hrho_{\bk}|^2+\epsilon_{\bk},\text{ so that }
  \frac{|u_{\bk}^2-|\hrho_{\bk}|^2|}{|\hrho_{\bk}|^2}=
  \frac{\epsilon_{\bk}}{|\hrho_{\bk}|^2}.
\end{equation}
Clearly we have that
\begin{equation}
  u_{\bk}\approx |\hrho_{\bk}|+\frac{\epsilon_{\bk}}{2|\hrho_{\bk}|},
\end{equation}
and therefore
\begin{equation}
  \frac{||\hrho_{\bk}|-u_{\bk}|}{|\hrho_{\bk}|}\approx
  \frac{\epsilon_{\bk}}{2|\hrho_{\bk}|^2}.
\end{equation}
For $\bk$ near to zero, the magnitude DFT coefficients,
$|\hrho_{\bk}|,$ tend to be large, and therefore we can expect these
recovered values to have somewhat smaller relative errors than their
squared counterparts. This, however, does not mean that the relative
mean square error is smaller for $|\hrho_{\bk}|$ than for $|\hrho_{\bk}|^2.$
An example comparing these errors is shown in Figure~\ref{fig13}. This
resulted from filling a $13\times 13$-hole for a thrice oversampled
$64\times 64$-image. The data used here is noise-free.
\begin{figure}[H]
     \centering
       \includegraphics[width=6cm]{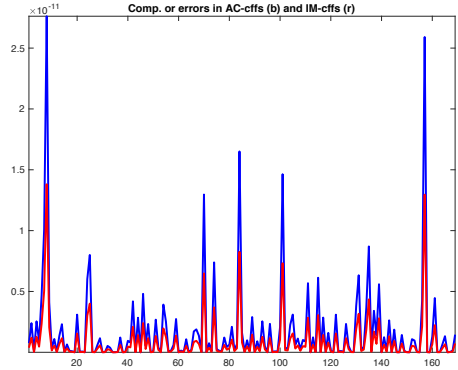}
       \caption{Relative errors in recovery of $\{|\hrho_{\bk}|^2\}$
         (blue curve)
         versus those for recovered values of $\{|\hrho_{\bk}|\}$ (red
       curve).}\label{fig13}
  \end{figure}

\section{The Effects of Noise}\label{noise.sec}
We now consider the effects of noise on the recovery process.  
Let $\bn\in\bbR^{W^c}$
represent the measurement error and noise. Then, instead of
solving~\eqref{lsqprob}, we actually need to solve the equation
\begin{equation}\label{eqn7n}
  \cF^*_{R,W}(\balpha_0+\bBeta)=-\cF^*_{R,W^c}(\ba+\bn).
\end{equation}
The relative effect of the noise introduced into $\balpha_0$ is then measured by
the ratio
\begin{equation}
  \frac{\|\bBeta\|}{\|\bn\|}= \frac{\|\cR_{R,W}\bn\|}{\|\bn\|}.
\end{equation}
The matrix $\cR_{R,W}$ has a representation of the form
\begin{equation}
  \cR_{R,W}=\sum_{j=1}^{|W|}\nu_j\bw_j\otimes\Bz_j^*,
\end{equation}
where $\{\bw_j:\: j=1,\dots,|W|\}$ is an orthonormal basis for the range and
$\{\Bz_j:\: j=1,\dots,|W|\},$ are pairwise orthonormal. For a vector $\bn$
\begin{equation}
 \|\cR_{R,W}\bn\|^2=\sum_{j=1}^{|W|}\nu_j^2|\langle\bn,\Bz_j\rangle|^2.
\end{equation}
The collection of vectors $\{\Bz_j\}$ can be augmented to give an
orthonormal basis, $\{\Bz_j:\: j=1,\dots,|W^c|\},$ for $\bbR^{W^c}.$
Hence for $\bn\in\bbR^{W^c},$ we have that
\begin{equation}
  \sum_{j=1}^{|W^c|}|\langle\bn,\Bz_j\rangle|^2=\|\bn\|^2.
\end{equation}

It is often reasonable to assume that the random variables
$\{|\langle\bn,\Bz_j\rangle|^2:\:j=1,\dots,|W^c|\}$ are independent and
identically distributed, and therefore the expected values satisfy:
\begin{equation}
  \bbE(|\langle\bn,\Bz_j\rangle|^2)=\frac{\|\bn\|^2}{|W^c|}.
\end{equation}
This would be the case for any additive, I.I.D.  noise process. 
In this case
\begin{equation}
  \bbE\left(\frac{\|\cR_{R,W}\bn\|^2}{\|\bn\|^2}\right)=\frac{1}{{|W^c|}}
  \left[\sum_{j=1}^{|W|}\nu_j^2\right].
\end{equation}
From the Cauchy-Schwarz inequality it follows that
\begin{equation}\label{eqn18.01}
  \bbE\left(\frac{\|\cR_{R,W}\bn\|}
           {\|\bn\|}\right)\leq\frac{1}{{\sqrt{|W^c|}}}
  \left[\sum_{j=1}^{|W|}\nu_j^2\right]^{\frac 12}.
\end{equation}
Even when the norm of $\cR_{R,W}$ is large, the quantity
appearing on the right hand side of~\eqref{eqn18.01} may turn out to be rather
modest.  This value gives a good estimate for the effect of noise on the
accuracy of the recovered values of the unmeasured DFT modulus data. The number
$\sqrt{|W^c|}\approx mN,$ (in $2d$), which shows that a potential  advantage of greater
oversampling is better noise suppression when recovering the unmeasured DFT
magnitude data.

As is well known, an important source of noise in CDI applications is 
Poisson noise that arises from the discreteness of X-ray photons.  This is
usually modeled as follows: if $|\hrho_{\bk}|^2$ is the ``true intensity'' of the
DFT coefficient in the $\bk$th pixel, then measurement $\ta^2_{\bk}$ is a sample
of a Poisson random variable with intensity $|\hrho_{\bk}|^2.$ The ``noise'' in
this pixel  is therefore given by
\begin{equation}
  n_{\bk}=\ta^2_{\bk}-|\hrho_{\bk}|^2.
\end{equation}
Clearly $\bbE( n_{\bk})=0,$ and $\bbE( n_{\bk}^2)=|\hrho_{\bk}|^2,$ and
therefore the SNR is $|\hrho_{\bk}|,$ which implies that the Poisson noise
process has a pixel dependent SNR.  As $\bbE(n_{\bk}^2)=\bbE(\ta^2_{\bk}),$
the noise is in some ways similar to the image itself. Indeed, the projection of
$\cF^*_{R,W^c}\bn$ into the range of $\cF^*_{R,W}$ tends be rather large.
 
Figure~\ref{fig11} shows histograms of the ratios,
$\frac{\|\cR_{R,W}\bn\|}{\|\bn\|},$ for different noise processes in a
triple oversampled example, where the condition number of $\cR_{R,W}$
is $1.093\times 10^{5}.$ These ratios are typically less than 400, for
uniform and Gaussian noise, and less than 2000, for Poisson noise. The
much smaller numbers in Gaussian and uniform cases are a reflection of
the fact that the orthogonal projection of $\cF^*_{R,W^c}\bn$ into the
range of $\cF^*_{R,W}$ tends to be quite small for $\bn$ a sample of
an additive I.I.D. noise process, as predicted
in~\eqref{eqn18.01}. As suggested by the discussion above, the
situation is rather different in the Poisson
case.

\begin{figure}[H]
  \centering
   \begin{subfigure}[H]{.3\textwidth}
        \centering
       \includegraphics[width=4cm]{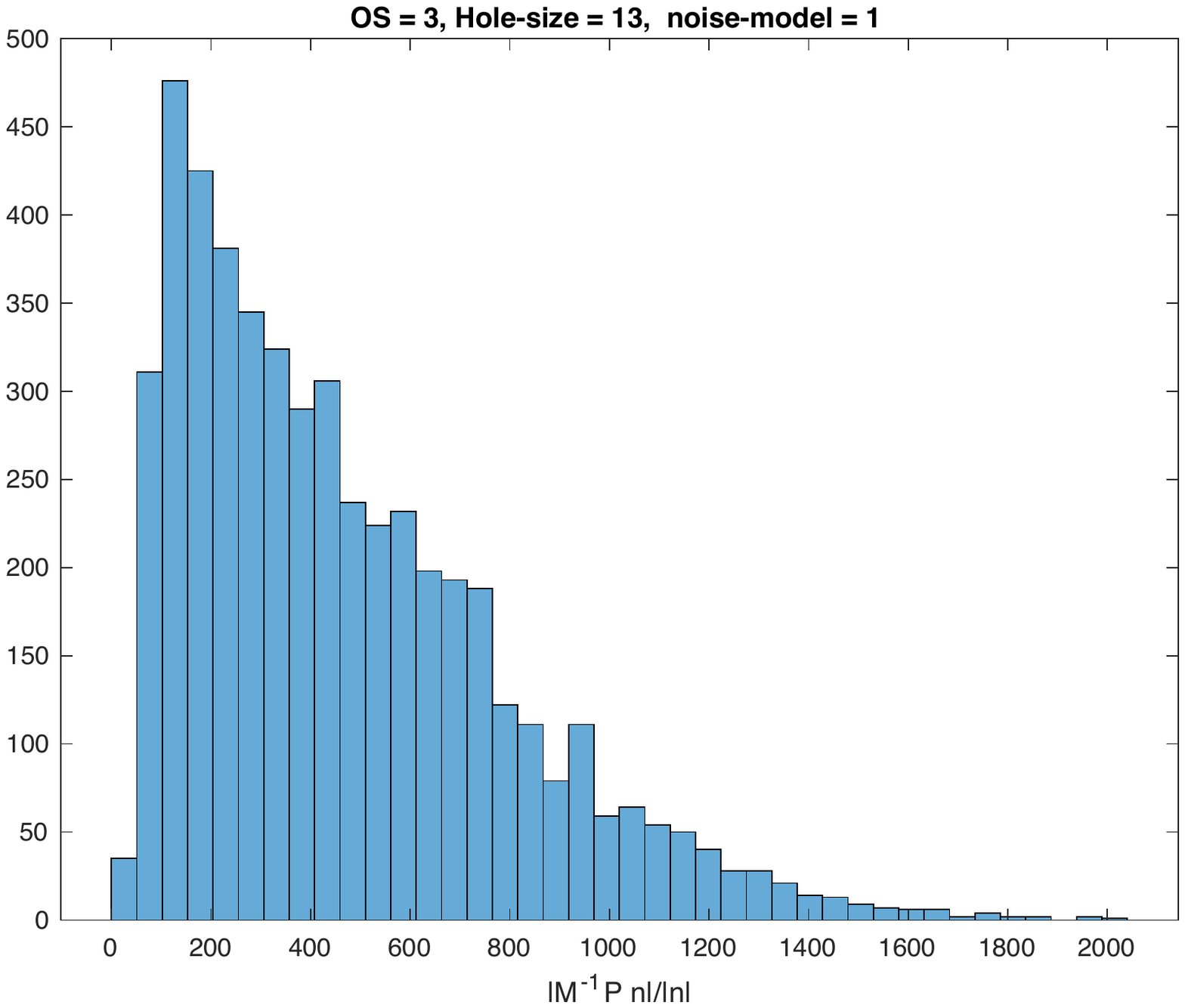}
        \caption{Uniform noise.}
   \end{subfigure}\quad
    \begin{subfigure}[H]{.3\textwidth}
        \centering
       \includegraphics[width=4cm]{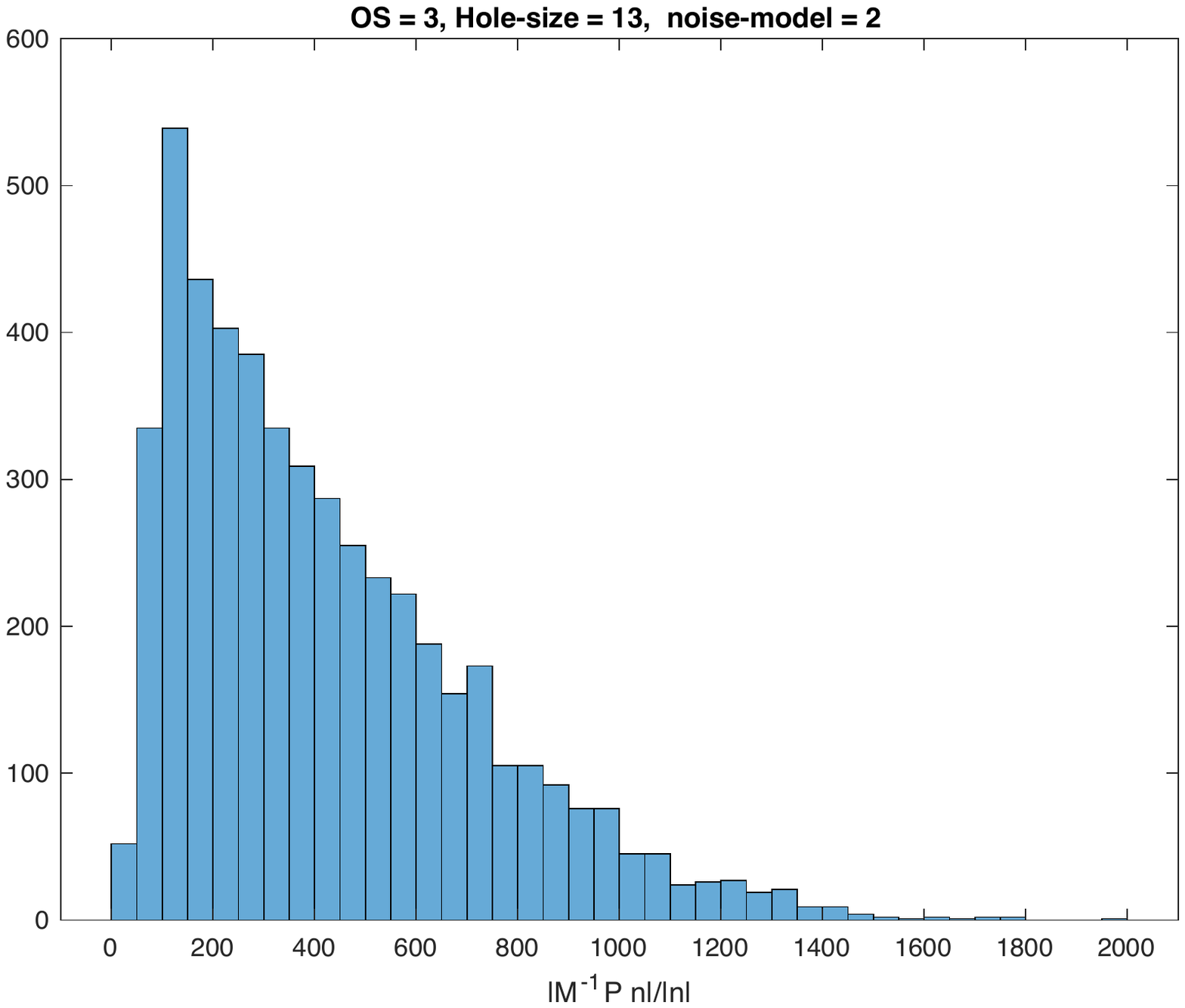}
        \caption{Gaussian noise.}
    \end{subfigure}\quad
     \begin{subfigure}[H]{.3\textwidth}
        \centering
       \includegraphics[width=4cm]{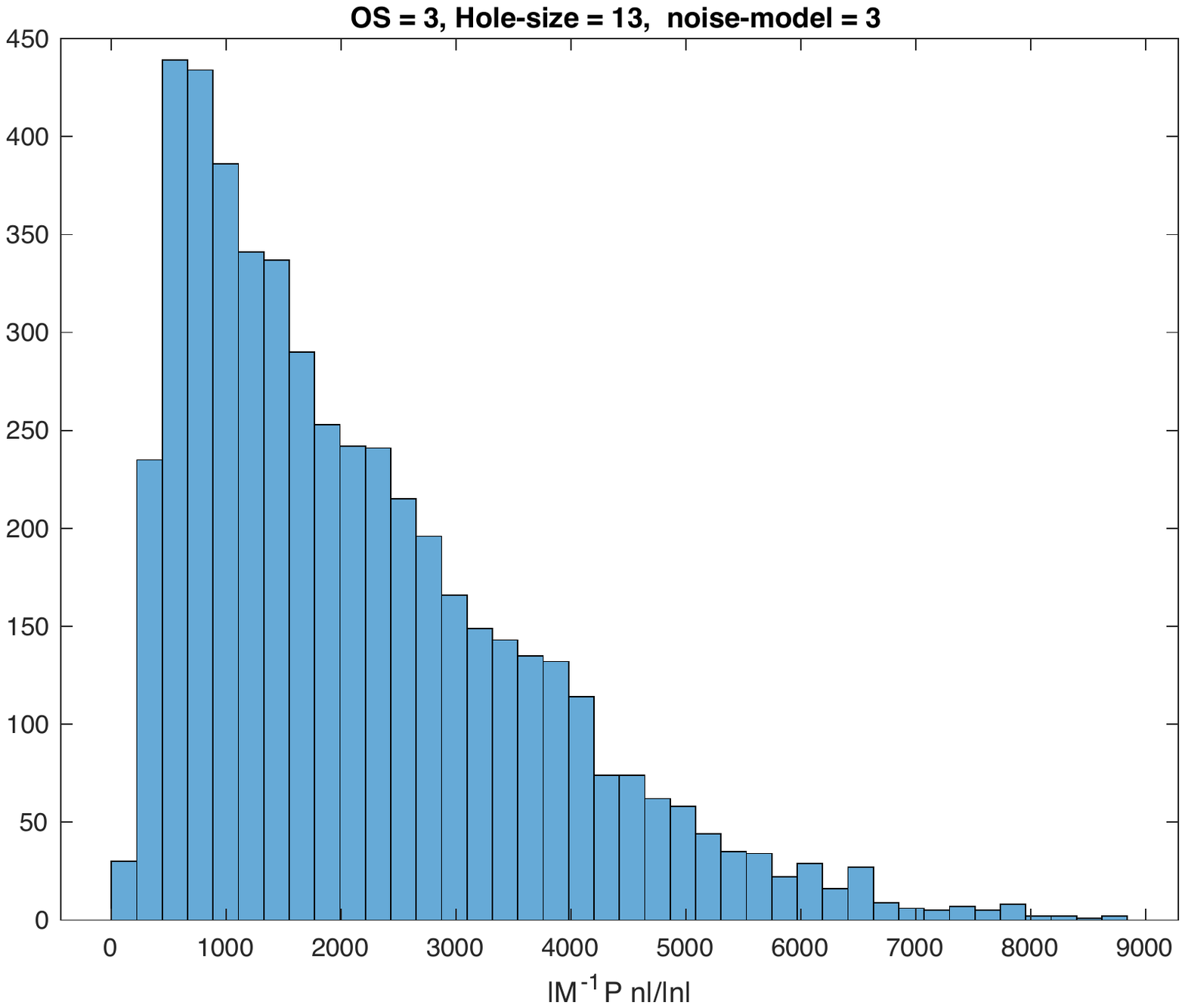}
        \caption{Poisson noise.}
    \end{subfigure}
        \caption{Histograms of the ratios
          $\|\cR_{R,W}\bn\|/\|\bn\|$ for 5000
          trials of uniform, Gaussian and Poisson noise. Here $w=7,
          m=3, N=64.$}\label{fig11}
  \end{figure}

\section{Hole Filling and Image Reconstruction}\label{sec6}
In this final section we consider how the hole-filling procedure
outlined above affects the outcome of image reconstruction using an
HIO-algorithm, see~\cite{Fienup1982, Bauschke:02}. This algorithm,
which iterates a map like that in~\eqref{eqn:alt-proj}, is currently
the basis for the best known, and most frequently used phase retrieval
method.  In the examples in this section we see that, for a certain
range of hole-sizes and in the absence of noise, the images obtained
by first filling in the unmeasured data using
equation~\eqref{eqn8.02}, and then using HIO are much better than
those obtained by simply using HIO. The picture is more complicated
when there is noise, with the results now depending on the character
of the noise and the SNR.  With noise, we find that it is often useful
to use some of the values recovered using equation~\eqref{eqn8.02},
and allow others to be filled in implicitly using HIO.

Suppose the data is of the form given by~\eqref{eqn4.01}, where $\balpha_{W^c}$ denotes the
measurements outside of the missing hole $W$.  Let  $P_A$  denote the
projection operator onto the nearest point in some set $A.$ 
We set $B_S= \{\brho : P_{S^c}(\brho) =
0\}$ and $\TA_{\ba}= \{\brho : \abs{P_{W^c}\circ\mathcal{F}(\brho)}^2 =
\balpha_{W^c}\}$. HIO and related algorithms provide an update of the form:
\begin{equation} \label{eqn:alt-proj}
  \brho^{(k+1)} = \brho^{(k)}+P_{\TA_{\ba}}\circ R_{B_s}(\brho^{(k)})-
    P_{B_s}(\brho^{(k)}),
\end{equation}
where $R_{B_s}(\brho)=2P_{B_s}(\brho)-\brho.$

Note that \eqref{eqn:alt-proj} operates agnostically in regards to the
missing data inside $W$, for every missing data value one less
constraint equation is imposed. Thus, conceivably filling in the
missing data in $W$ before applying HIO (or any such phase retrieval
algorithm) could improve the quality of the reconstructed image.

Extensive numerical simulations indeed confirm this to be true. We fix
a test image and the set of corresponding squared DFT magnitude
measurements, $\ba_{W^c}^2,$ with low frequencies removed that belong
to a square, $W,$ of size $(2w-1)\times(2w-1)$ centered on $(0,0).$ 
Here, we use triple oversampling so that $w = \lfloor*{ 1+  3 k_0}\rfloor$.
We
then compare the following two recovery procedures: (i) HIO is
directly applied to the ``measured'' data $\ba^2_{W^c}$, and (ii) the
missing data in $W$ is first filled in using the recovery operator,
and then HIO is applied to the full data set $\ba^2$ (henceforth
referred to as the ``Fill+HIO'' algorithm). It is observed that
Fill+HIO produces superior image reconstruction for values of $k_0$ for
which the linear system, given by~\eqref{lsqprob}, can be solved
accurately.  Fill+HIO provides improved recovery up to $w=15$ 
($k_0 \approx 5$),
whereas HIO alone fails after $w=6$ ($k_0 \approx 2$).
Typical comparative results on simulated CDI
data are shown in Figs~\ref{fig:clean-compar} and
~\ref{data-table}.

Practical approaches for the phase retrieval problem in the
presence of noisy data typically involve numerical
optimization~\cite{Shi:18,Barmherzig2020} and data-driven methods,
see~\cite{pmlr-v80-metzler18a}, topics that are outside the scope
of this paper and which we do not pursue further. However,
we do provide some general remarks, and guidelines for applying
the Fill+HIO algorithm to problems with noisy data.

\begin{figure}[H] 
  \centering
 \includegraphics[width=15cm]{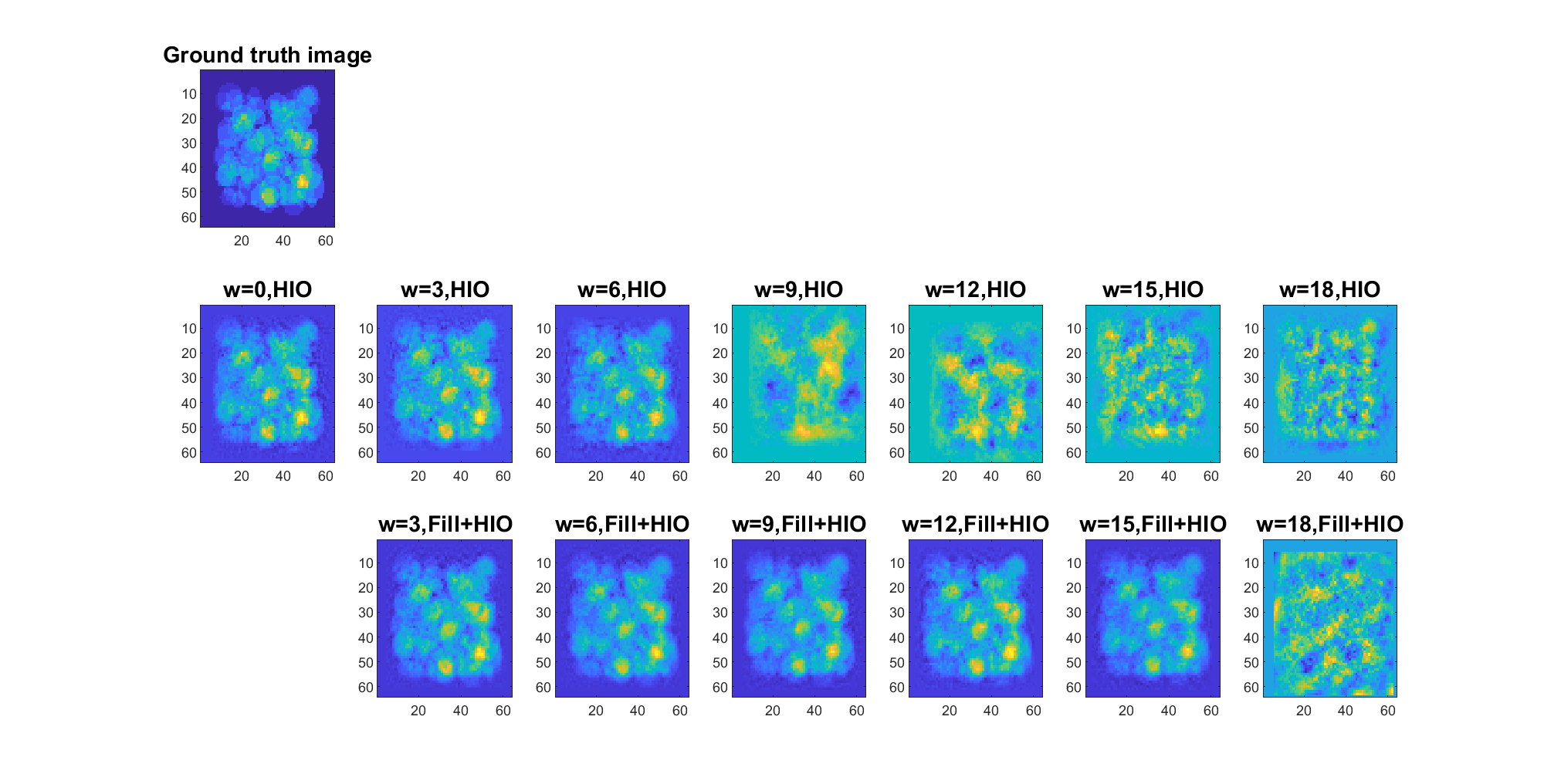}
\caption{Image reconstruction via HIO alone, in the middle row, versus
  phase retrieval using hole-filling followed by HIO (``Fill+HIO'') in
  the bottom row. The image is of size $64\times 64$, and data is of
  size $192 \times 192,$ so $m=3,$ and the $(2w-1)\times (2w-1)$
  frequencies centered on $(0,0)$
  zeroed-out. }\label{fig:clean-compar}
\end{figure}

When used with real measurements, the filled-in data values obtained
via~\eqref{eqn8.02} are necessarily contaminated by noise. Thus,
there arises a tradeoff between ignoring the missing data and
first recovering estimates for these values which contain errors. It is
observed from numerical simulations that, with noisy data, the best
image reconstruction is achieved by utilizing a subset of the data
found using~\eqref{eqn8.02}, and allowing HIO to recover the remaining
coefficients.

A natural procedure for determining the best subset to choose is to run multiple
trials of the Fill+HIO procedure where, for each trial, the amount of recovered
data that is used is incrementally increased. While, in practice, the true
smallest error achieved throughout such trials is unknown (since the
ground-truth image is unknown), an empirically successful proxy is to consider
the \textit{data error} for each trial; if $\brho$ is the approximate
reconstruction satisfying the support condition, then the data error is:
\begin{equation} \label{eqn:data-err}
\frac{\norm{\abs{\widehat{\brho}}^2-\abs{\widehat{\brho_0}}^2}{2}}{\norm{\abs{\widehat{\brho_0}}^2}{2}}.
\end{equation}
For our experiments we choose the partial filling that minimizes this quantity.

\begin{figure}[H]
  \centering
       \includegraphics[width=12cm]{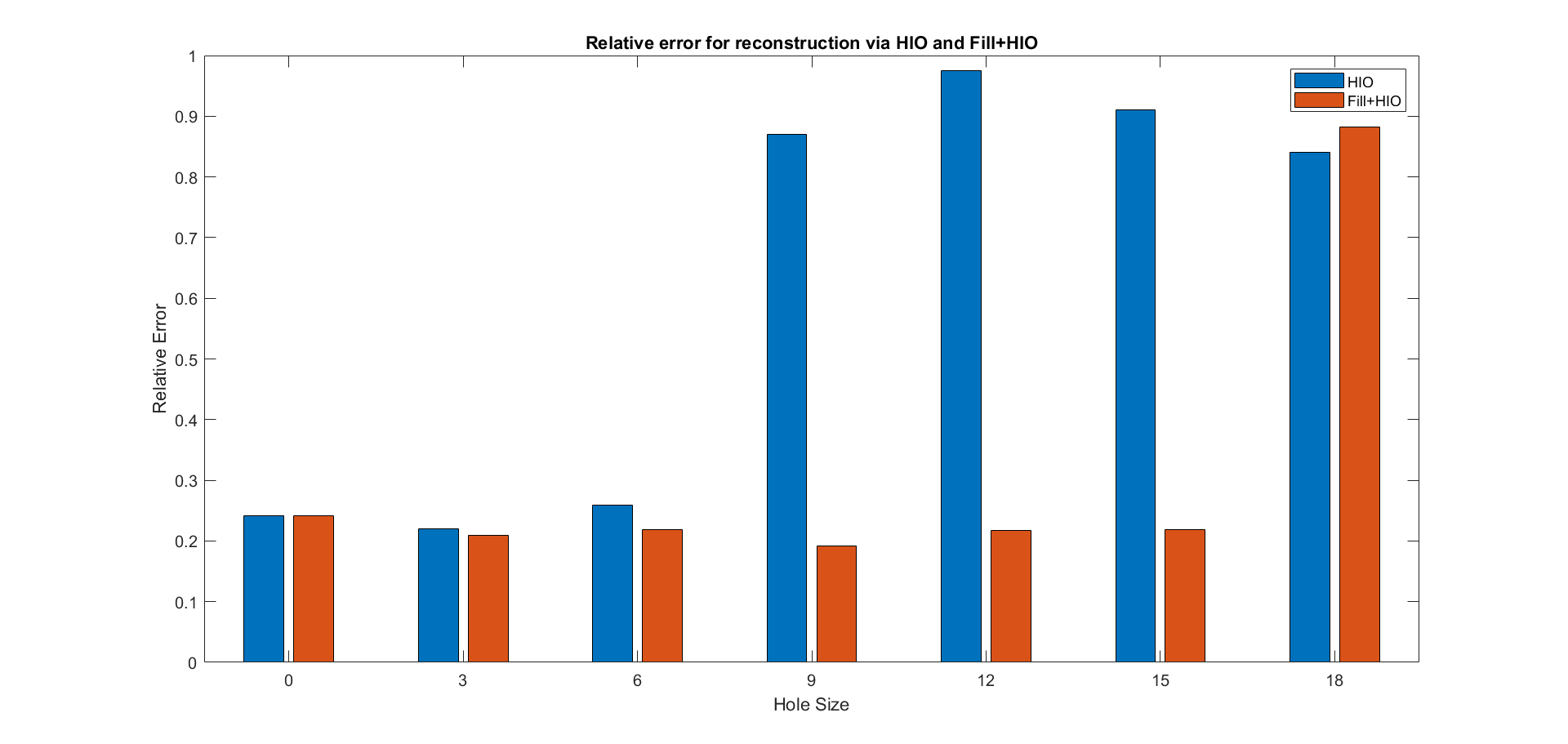}
        \caption{
Relative errors $\frac{\norm{\brho-\brho_0}{2}}{\norm{\brho_0}{2}}$
for the ground truth image $\brho_0$ shown in Fig. 1 and the recovered images
$\brho$ using the HIO (blue) and Fill+HIO (orange) algorithms, respectively. Data has the
$(2w-1)\times(2w-1)$ square of lowest frequencies zeroed-out.
}
\label{data-table}
\end{figure}


We concentrate on the case of data corrupted by Poisson noise,
such as typically occurs in CDI experiments. The discussion at the end
of Section~\ref{s.rec_alg} clearly indicates that the largest
amplification of noise occurs in the recovery of the lowest-frequency
values. Thus, a natural search strategy for partially filling a
rectangle of missing data with recovered values is to work from the
outer boundary of $W$ inward,  considering annular  regions,
which restore the mid-range of missing frequencies.

We apply this procedure to simulated CDI data, corresponding to the
setup in Figs.~\ref{fig:clean-compar} and~\ref{data-table},
when $w=5, m=3$, that is corrupted by Poisson noise with a
signal-to-noise ratio of 1000. Over 1000 trials, we observe that the
distribution of the recovery error is noticeably improved by restoring
some of the missing data before running HIO. This is illustrated in
the histograms shown in Figure~\ref{fig:noisy-comp}.

\begin{figure}[H] 
\centering
        \includegraphics[width=0.8\textwidth]{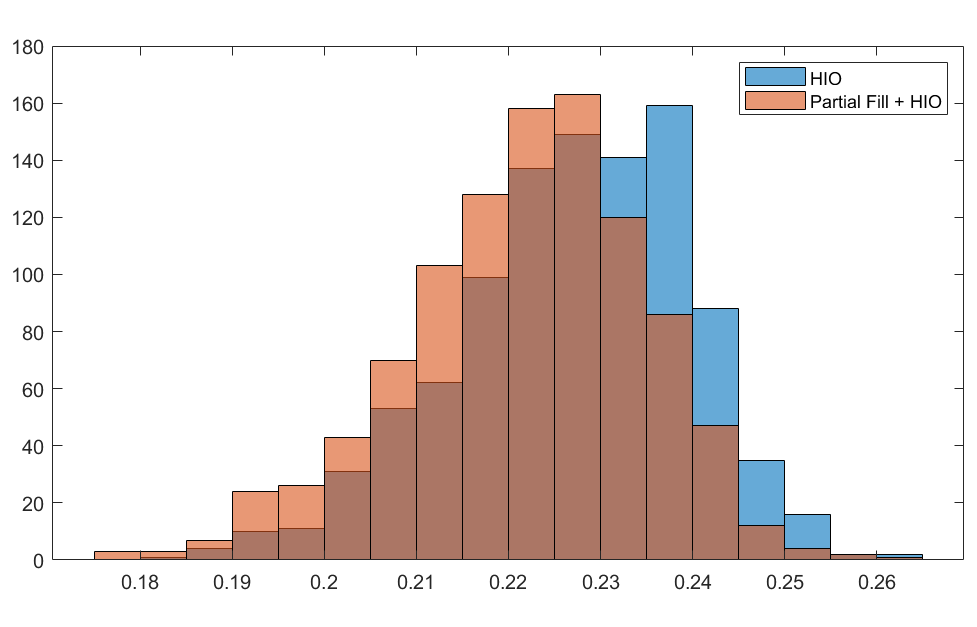}
    \caption{Histograms of relative error values generated from 1000 noisy
      instances of the simulated CDI setup shown in
      Figure~\ref{fig:clean-compar}, using the HIO (blue) and Partial
      Fill+HIO (orange)
      algorithms, respectively. Partial Fill+HIO significantly improves the
      error distribution.}\label{fig:noisy-comp}
\end{figure}

\section{Conclusions}\label{concl}

In this paper, we have investigated the problem of recovering the 
unmeasured data within the beamstop in CDI imaging.
Rather than including the full complex Fourier transform at these missing
locations as part of a global inverse problem, we have shown that the 
modulus Fourier data within the beamstop can itself be recovered, as the 
solution to a linear least squares problem. Algorithms for phase 
retrieval can then be used in a second step on this ``filled in" data set.
The power of this approach is illustrated in Fig.  \ref{data-table}.

We also analyzed under what conditions this method of recovery is
likely to be successful. If $W$ and $S_{AC}$ are rectangular subsets
of $J,$ then the answer hinges on the value of the dimensionless
parameter $\beta k_0$, here $\beta>1$ is determined by the support of
the autocorrelation function $\rho\star\rho,$ and $2k_0$ is the hole
width. In this analysis, we assume the object of interest has spatial
dimensions normalized to unit length.

Our analysis provides a generalization of Hayes' theorem
\cite{Hayes1982} to the case of phase retrieval with missing data.
Theorem~\ref{thm0}, shows that, very often, the missing data, hidden
by the beamstop, can be \emph{uniquely} recovered from the measured
magnitude data. Hayes' theorem then applies directly to the completed
data set to show that the solution to the phase retrieval problem is
again generically unique.

This method for recovering the unmeasured magnitude data should be
applicable to many classical phase retrieval problems. We are
currently investigating the extension of our results to other
X-ray imaging modalities.

\appendix
\section{Appendix}\label{A1}
In this appendix we derive an asymptotic bound for $\mu_0(R,W,d),$
assuming that $W$ and $S_{AC}=R^c$ are rectangular subsets. This bound
becomes more accurate as $m,N$ tend to infinity and the product $\beta
k_{0}$ grows. A similar question is addressed in Barnett's recent
paper~\cite{BarnettDFT2020} on the conditioning of sub-blocks of the
DFT matrix. An upper bound on $\mu_0(R,W,1),$ follows from the
estimates in Barnett's paper.

In~\eqref{eqn10.01} we show that
\begin{equation}\label{eqn41.003}
  \mu_0(S_{AC},W,d)=1-\max_{\balpha_W\neq
    0}\frac{\|\cF^*_{S_{AC},W}\balpha_W\|^2}
  {\|\balpha_W\|^2}.
\end{equation}
It should first be noted that as
$\cF^*\balpha=[\cF(\overline{\balpha})]^*,$ replacing $\cF^*$ with
$\cF$ does not change the value of the maximum in this formula.  The
key consequence of this formula is that the smallest singular value
of $\cF^*_{R_,W}$ is determined by the largest singular value of
$\cF^*_{S_{AC},W}.$ Because $S_{AC}\subset J$ is a rectangular set,
and the $d$-dimensional DFT is a tensor product of 1-dimensional DFTs,
this allows the determination of these singular values as products of
singular values that arise in the 1-dimensional case.

With this in mind, we let $\balpha$ be a sequence of length $2mN$
supported in $[1-w:w-1],$ and $f,$  a  real valued function
supported in $[-k_{0},k_{0}],$ where $w = \lfloor{1 + mk_0}\rfloor,$
with
  $f\left(\frac{j}{m}\right)=\sqrt{\frac{m}{2N}}\alpha_j.$ The  discrete
Fourier  transform of $\balpha$ is
\begin{equation}\label{eqn39.002}
  \hat{\alpha}_k=\frac{1}{m}\sum_{j=-w}^{w}f\left(\frac{j}{m}\right) e^{-\frac{2\pi i jk}{2mN}}\approx
\hf\left(\frac{k}{2N}\right).
\end{equation}
With these approximations it follows
that
\begin{equation}
  \begin{split}
  \sum_{k=-\beta N}^{\beta N}|\hat{\alpha}_k|^2&\approx\sum_{k=-\beta
    N}^{\beta N}\left|\hf\left(\frac{k}{2N}\right)\right|^2,\\
  \sum_{j=1-w}^{w-1}|\alpha_j|^2&\approx
2N \int_{-k_{0}}^{k_{0}}|f(x)|^2dx,
 \end{split}
\end{equation}
and therefore the ratio, whose maximum defines $\mu_0([-\beta N:\beta
  N],[1-w:w-1],1),$ is approximated by:
\begin{equation}\label{eqn41.002}
  \frac{ \sum_{k=-\beta N}^{\beta N}|\hat{\alpha}_k|^2}
       {\sum_{j=1-w}^{w-1}|\alpha_j|^2}\approx
  \frac{\int_{-\frac{\beta}{2}}^{\frac{\beta}{2}}|\hf(y)|^2dy}{ \int_{-k_{0}}^{k_{0}}|f(x)|^2dx}.
\end{equation}
As noted above, $\balpha$ is a sequence supported in $[1-w:w-1].$ The ratio of the
sums on the left hand side converge 
to the ratio of integrals on the right hand
side as $N,m\to\infty$.

We define
\begin{equation}
  \lambda_0(k_{0},\beta,1)=\max_{f\in \cA^2_{k_{0}}\setminus
    \{0\}}
  \left[ \frac{\int_{-\frac{\beta}{2}}^{\frac{\beta}{2}}|\hf(y)|^2dy}{ \int_{-k_{0}}^{k_{0}}|f(x)|^2dx}\right],
\end{equation}
where $\cA^2_{k_{0}}$ consists of functions in $L^2(\bbR)$ supported
in $[-k_{0},k_{0}].$ The calculations above show that, at least
asymptotically, as $m,N$ grow,
\begin{equation}\label{eqn43.002}
  \mu_0(S_{AC},W,1)\approx 1-\lambda_0(k_{0},\beta,1).
\end{equation}
The quantity on the right hand side of~\eqref{eqn43.002} has been
intensively studied in the literature on prolate spheroidal functions,
see~\cite{Fuchs1964,Slepian1978}; adapting the result of Theorem 1
from~\cite{Fuchs1964} we obtain:
\begin{equation}\label{eqn44.002}
    \mu_0(S_{AC},W,12)
  \sim 4\pi\sqrt{\beta  k_{0}}e^{-2\pi \beta k_{0}}.
\end{equation}
The asymptotic evaluation on the right hand side of~\eqref{eqn44.002} is in the
limit $\beta k_{0}\to\infty.$

In $d$ dimensions,  suppose that the support of the autocorrelation image is
contained in the cuboid $[-\frac{\beta}{2},\frac{\beta}{2}]^d$ and
$W=[1-w:w-1]^d.$ Let $\lambda_0(k_0,\beta,d)$ be the $d$--dimensional analogue
of $\lambda_0(k_0,\beta,1);$ the extremizer defining $\lambda_0(k_0,\beta,d),$ is just
the $d$--fold tensor product of the $1d$--extremizer.  Hence we see that
\begin{equation}
  \begin{split}
  \lambda_0(k_0,\beta,d)=  \lambda_0(k_0,\beta,1)^d
  &\sim (1-4\pi\sqrt{\beta k_{0}}e^{-2\pi \beta k_{0}})^d\\
  &\approx 1-4d\pi\sqrt{\beta k_{0}}e^{-2\pi \beta k_{0}},
  \end{split}
\end{equation}
and therefore
\begin{equation}
   \mu_0(S_{AC},W,d)\sim 4d\pi \sqrt{ \beta k_{0}}e^{-2\pi \beta k_{0}} .
\end{equation}
The norm of $\cR_{R,W}$ might be smaller than $[\mu_0(R,W,d)]^{-\frac
  12}$, as $\cF^*_{R,W^c}$ has norm less than 1. In fact, in our
applications, the norm of $\cF^*_{R,W^c}$ is very close to 1. 
Hence the norm of the recovery operator is given
asymptotically by the quantity
\begin{equation}\label{eqn48.004}
  [\mu_0(S_{AC},W,d)]^{-\frac 12}\sim  \frac{e^{\pi \beta k_{0}} }{\sqrt{4d\pi}[\beta k_{0}]^{\frac 14}}.
\end{equation}
Since the minimum singular value of $\cR_{R,W}$ is very close to 1,
this is also an asymptotic estimate for the condition number.

\begin{figure}[H]
  \centering
   \begin{subfigure}[H]{.3\textwidth}
        \centering
       \includegraphics[width=4cm]{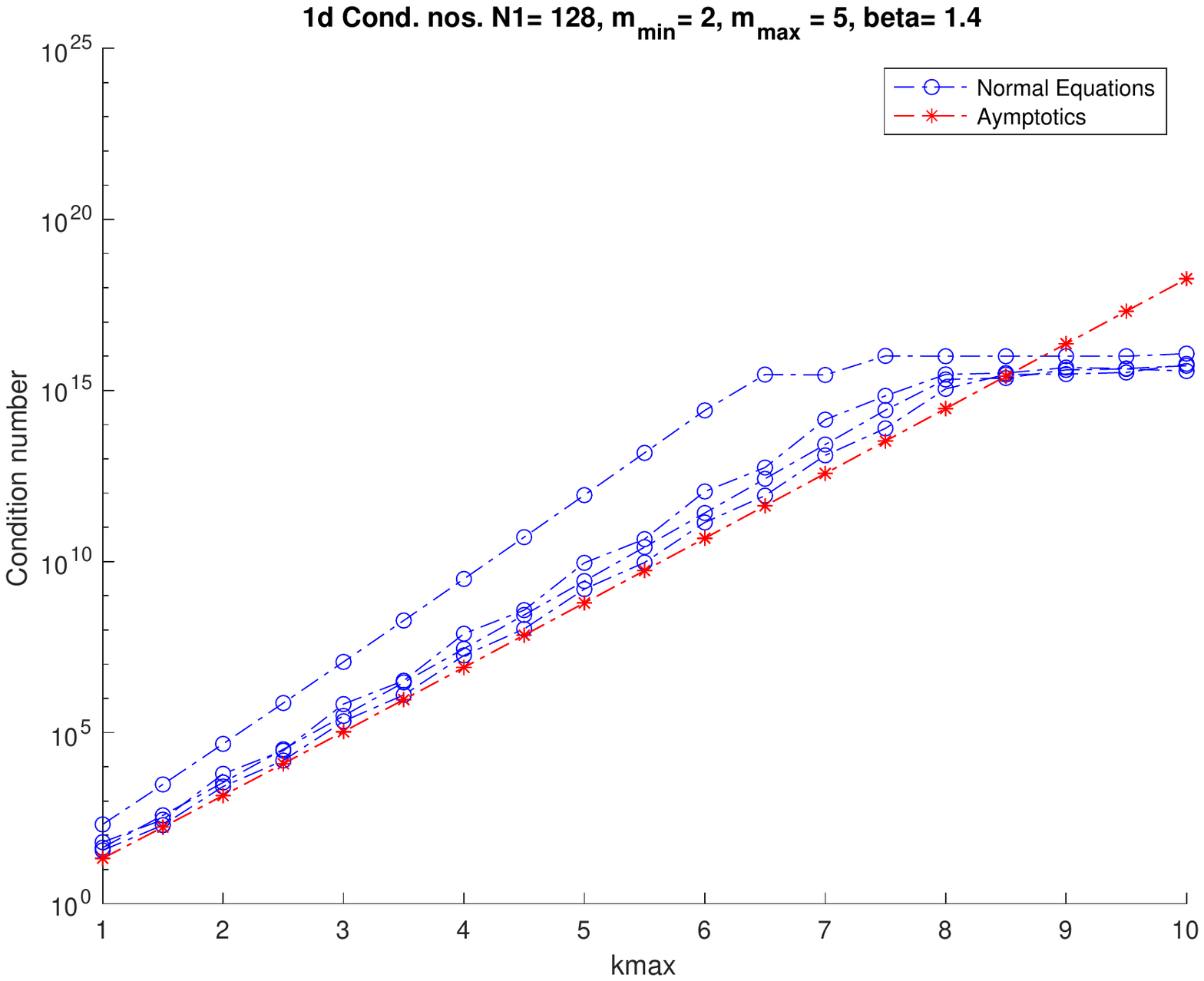}
        \caption{$\beta = 1.4$}
   \end{subfigure}\quad
    \begin{subfigure}[H]{.3\textwidth}
        \centering
       \includegraphics[width=4cm]{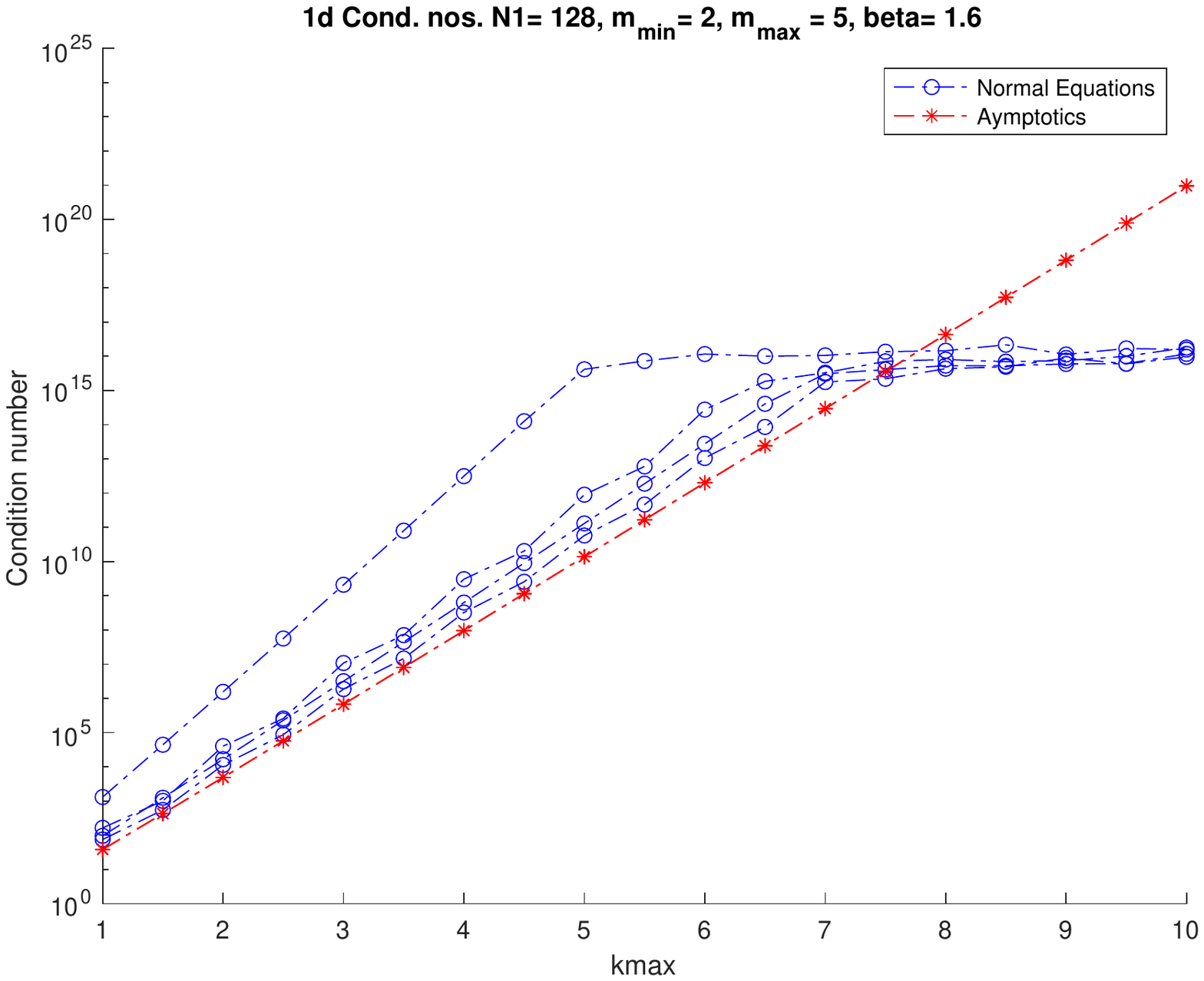}
        \caption{$\beta = 1.6$}
    \end{subfigure}\quad
     \begin{subfigure}[H]{.3\textwidth}
        \centering
       \includegraphics[width=4cm]{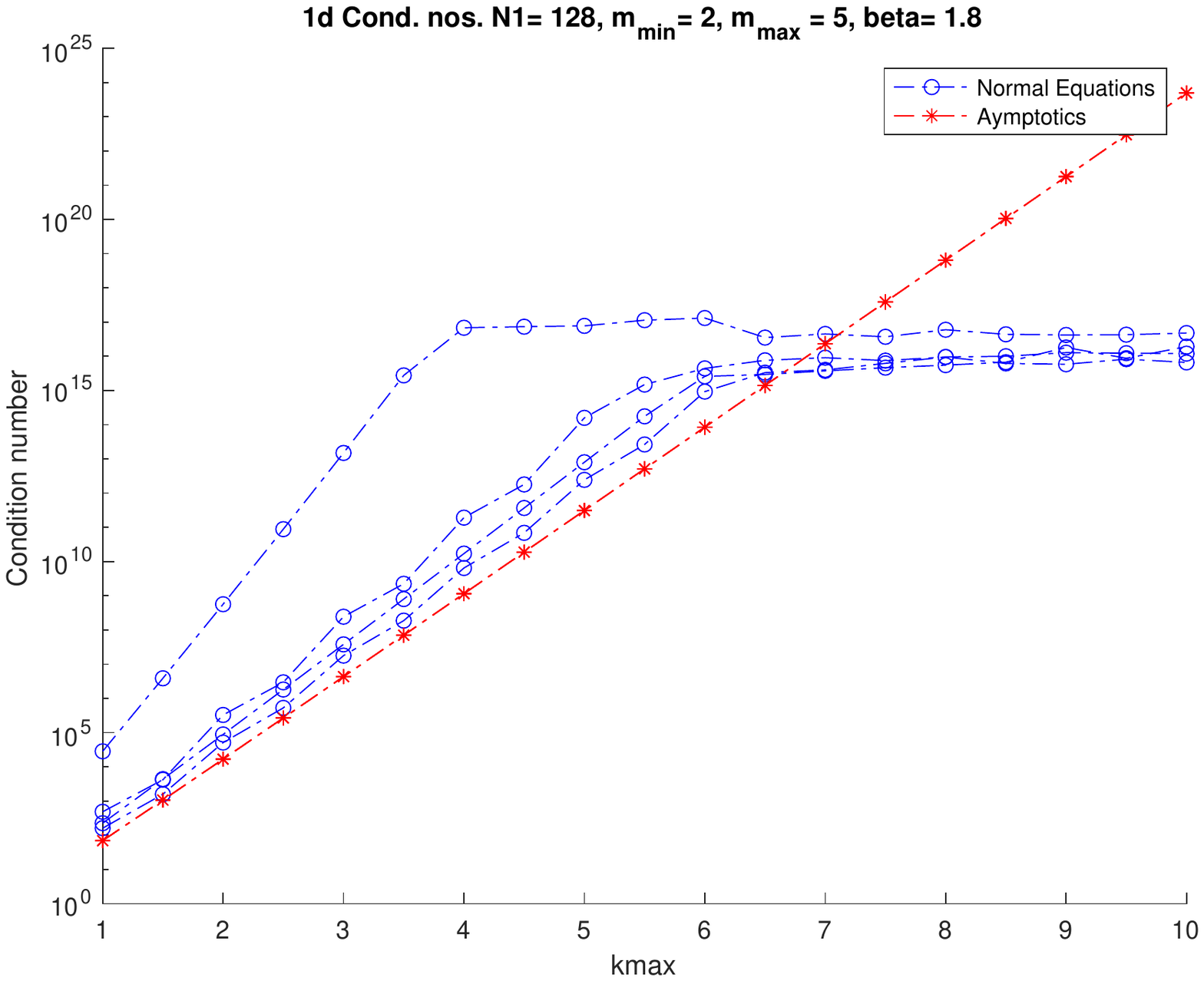}
        \caption{$\beta = 1.8$}
    \end{subfigure}
        \caption{Plots of $[\mu_0(S_{AC},W,1)]^{-\frac 12},$ for
          $m=2,3,4,5,$ (in blue) along with a plot of the asymptotic
          formula (in red). In these plots $k_{0}$ ranges from 1 to
          10, and $y$-axis goes from $10^0$ to $10^{25}.$}\label{fig13.04}
  \end{figure}

In Figure~\ref{fig13.04}[a,b,c] we show values of
$[\mu_0(S_{AC},W,1)]^{-\frac 12},$ for $\beta=1.4, 1.6, 1.8.$ In each
plot, there are 4 blue curves corresponding to $m=2,3,4,5,$
along with the predictions (in red) made
by~\eqref{eqn48.004}, with $d=1.$  The $x$-axis is $k_{0},$ which ranges from
$1$ to $10.$ As $m$ increases, the blue curves get closer to the plot
of the asymptotic formula. As long as there is sufficient accuracy in
the double precision calculation, the asymptotic formula is close to
exact calculation by the time $m=3,$ and is a lower bound throughout
this range of parameters. Once the condition number reaches $\sim
10^{15},$ the calculation of $[\mu_0(S_{AC},W,1)]^{-\frac 12}$
saturates and is no longer meaningful. In these computations $N=128,$
and increasing it does not significantly change these results.

 This analysis shows that, asymptotically, the size of the hole in
 $\bk$-space that can be stably filled depends mostly on the product
 $\beta k_{0};$ in particular, it does not depend strongly on the
 extent of oversampling, provided that $m\geq 3.$ Perhaps most
 surprisingly, the norm of $\cR_{R,W}$ decreases with the dimension!
 Figure~\ref{fig13.04} and the tables in Example~\ref{exmpl3.01} show
 that the asymptotic formula provides a lower bound on
 $\|\cR_{R,W}\|$, which improves as $m,N$ increase.

\section{Appendix}\label{A2}

To prove Theorem~\ref{thm1}  we recall the variational characterizations of the singular
values of a linear map $A:\bbC^p\to\bbC^n$ and assume that $p\leq
n.$ It turns out to be simpler in the proof to list the singular values
in increasing order:
$ s_1\leq s_2\leq\cdots\leq s_p.$ Note that if $p> n,$
then $ s_1=0,$ which is not too interesting. The $j$th singular
value of $A$ has 2 variational characterizations:
\begin{equation}
  \begin{split}
   s^2_j&=\min_{\{S\subset \bbC^p:\: \dim S=j\}}\max_{\bx\in S:\:
    \bx\neq \bzero}\frac{\|A\bx\|^2}{\|\bx\|^2}\\
   s^2_j&=\max_{\{S\subset \bbC^p:\: \dim S=p-j+1\}}\min_{\bx\in S:\:
    \bx\neq \bzero}\frac{\|A\bx\|^2}{\|\bx\|^2}.
  \end{split}
\end{equation}

\begin{proof}
  The basic observation is that, because $\cF$ is a unitary map,
for $\bx\in \Im\pi_K,$ we have the identity
\begin{equation}
  \|\bx\|^2=\|\cF_{L,K}\bx\|^2+\|\cF_{L^c,K}\bx\|^2.
\end{equation}
We think of $\cF_{L,K}$ as a map from $\bbC^K$ to $\bbC^J,$ with
singular values $ s_1\leq\cdots \leq s_p,$ where $p=|K|,$ and
$\cF_{L^c,K}$ as a map from $\bbC^K$ to $\bbC^J,$ with singular values
$t_1\leq\cdots\leq t_p.$  Using
the observations above, we see that
\begin{equation}
  \begin{split}
   s_j^2&=\min_{\{S\subset \bbC^K:\: \dim S=j\}}\max_{\bx\in S:\:
    \bx\neq \bzero}\frac{\|\cF_{L,K}\bx\|^2}{\|\bx\|^2}\\
  &=\min_{\{S\subset \bbC^K:\: \dim S=j\}}\max_{\bx\in S:\:
    \bx\neq
    \bzero}\left[1-\frac{\|\cF_{L^c,K}\bx\|^2}{\|\bx\|^2}\right]\\
 &=1- \max_{\{S\subset \bbC^K:\: \dim S=j\}}\min_{\bx\in S:\:
    \bx\neq \bzero}\frac{\|\cF_{L^c,K}\bx\|^2}{\|\bx\|^2}\\
  &=1-t_{p-j+1}^2.
  \end{split}
\end{equation}
\end{proof}



  \noindent
  {\sc D.A.~Barmherzig:} dbarmherzig@flatironinstitute.org\newline
  {\sc A.H.~Barnett:} abarnett@flatironinstitute.org\newline
  {\sc C.L.~Epstein:} cle@math.upenn.edu\newline
  {\sc L.F.~Greengard:} lgreengard@flatironinstitute.org\newline
  {\sc J.F.~Magland:} jmagland@flatironinstitute.org\newline
  {\sc M.~Rachh:} mrachh@flatironinstitute.org
\end{document}